\newtheorem{theorem}{Theorem}
\newtheorem{lemma}[theorem]{Lemma}
\newtheorem{remark}{Remark}
\newtheorem{corollary}[theorem]{Corollary}
\def\nin{\relax\hbox{$/\kern-.7em{\rm \in\,}$}}
\begin{document}

%\linenumbers

\title[On a unique two-dimensional integral operator]
{On a unique two-dimensional integral operator homogeneous with respect to all orientation preserving linear transformations}

\vspace{10mm}

\author{Zhirayr Avetisyan *}
\address{Department of Mathematics: Analysis, Logic and Discrete Mathematics, University of Ghent, Krijgslaan 281, Building S8, B 9000 Ghent, Belgium $\&$ Regional Mathematical Center, Southern Federal
University, Rostov-on-Don, 344090, Russia}
\email{zhirayr.avetisyan@ugent.be}

\author{Alexey Karapetyants}
\address{Institute of Mathematics, Mechanics and Computer Sciences $\&$ Regional Mathematical Center, Southern Federal
University, Rostov-on-Don, 344090, Russia}
\email{karapetyants@gmail.com}

\author{Adolf Mirotin}
\address{Department of Mathematics and Programming Technologies, Francisk Skorina Gomel State University, Gomel, 246019, Belarus $\&$ Regional Mathematical Center, Southern Federal
University, Rostov-on-Don, 344090, Russia}
\email{amirotin@yandex.ru}

\keywords{Homogeneous integral operator, tensor product of spaces, Hilbert transform.\\
*Corresponding author}
\subjclass[2010]{}

\begin{abstract}
In this paper, we consider a two-dimensional operator with an antisymmetric integral kernel, recently introduced by Z. Avetisyan and A. Karapetyants in connection to the study of general homogeneous operators. This is the unique two-dimensional operator that has an antisymmetric kernel homogeneous with respect to all orientation-preserving linear transformations of the plane. It is shown that the operator under consideration interacts naturally, both in Cartesian and polar coordinates, with projective tensor products of some classical functional spaces, such as Lebesgue, Hardy, and H\"{o}lder spaces; conditions for their boundedness as operators acting from these spaces to Banach lattices of measurable functions and estimates of their norms are given.
\end{abstract}

\maketitle

\section{Introduction}\setcounter{equation}{0}

In \cite{AK} the unique (up to a constant factor) integral operator with
antisymmetric integral kernel homogeneous with respect to all orientation preserving linear transformations was introduced in the form
\begin{equation}\label{eq:UniqueIntOpn=2}
({\bf K}f)(x_1,x_2)=\frac{1}{\pi}\int_{\mathbb{R}}\int_{\mathbb{R}}\frac{f(y_1,y_2)}{x_1y_2-x_2y_1}dy_1dy_2.
\end{equation}
The convention we adopted in \eqref{eq:UniqueIntOpn=2} is that the Cauchy principal value applies to
each integral. What we can immediately see is that, at least formally, this operator maps an arbitrary function to a homogeneous function on $\mathbb{R}^2\setminus \{0\}$ of order $-1,$ in case the integral (\ref{eq:UniqueIntOpn=2}) exists.

The operator ${\bf K}$ via polar coordinates is related to the integral operator $\mathcal{K}$ acting on functions $\varphi=\varphi(r,\alpha)$ as follows,
\begin{equation}\label{eq:polarK}
(\mathcal{K}\varphi)(r,\alpha) = \frac{1}{\pi r} \int_0^\infty d\rho\int_{-\pi}^{\pi}\frac{\varphi(\rho,\theta)}{\sin(\theta-\alpha)}d\theta .
\end{equation}
As will become apparent from the discussion in the next section, polar coordinates are more relevant to this integral kernel from the conceptual point of view; it arises naturally in the punctured plane $\mathbb{R}^2\setminus\{0\}$, and its extension to the whole plane is the reason behind analytical subtleties such as the use of Cauchy principal values. Thus, the operator $\mathcal{K}$ represents an interest on its own
and, in a certain sense, better reflects certain features of both: for instance, it is evident that it vanishes on radial functions.

Thus, we have two formal integral operators, or two integral kernel formulae, related to each other by a coordinate transformation, see Section \ref{Sec3}. Of course, at the level of an integral kernel, these are merely two representations of the same kernel, and there are uncountably many other possible coordinates that could be applied, in principle. However, it is when we construct concrete integral operators, by choosing the appropriate domains of definition, that the particular coordinate representations become significant. Namely, we will see that each of our two coordinate representations is well suited for defining an integral operator on certain tensor products of classical function spaces. Thus, the formal integral operators $\bf K$ and $\mathcal{K}$ will be promoted to distinct integral operators on domains of tensor product structure according to their coordinate representations. Boundedness results will be obtained on algebraic tensor products, and then naturally extended to projective tensor products of Banach spaces. For the definition of a projective tensor product $F_1\hat\otimes_\pi F_2$ of Banach spaces $F_1, F_2$ see, e.g., \cite{Ryan}, or \cite{Helem}.

Integral operators with homogeneous kernels and equations with such operators have attracted the attention of many experts. In the one-dimensional case, this theory goes back to the works of Hardy-Littlewood-Pólya. Moreover, the one-dimensional case can be reduced to convolution operators by means of an exponential change of variable. But the situation is different in the case $n\geq 2:$ a large class of operators with homogeneous kernels cannot be reduced to convolution operators. A very detailed study of integral operators with homogeneous kernels in the classical sense is found in the monographs \cite{KarapetiantsSamko-book-rus,KarapetiantsSamko-book-eng}, see also the review paper \cite{KarapetiantsSamko}. The article \cite{AK} appeared in response to the desire to define and characterize homogeneous operators (not necessarily integral ones) and homogeneous integral kernels at an exhaustive level of abstraction, as the most general extension of the theory of integral operators with homogeneous kernels presented in the above-mentioned books.

After we have analyzed and understood the abstract properties of operators of the type (\ref{eq:UniqueIntOpn=2}), the question naturally arises about a more detailed study of such operators in specific spaces of functions suitable for such operators, which is the main goal of this work. This is done in Sections \ref{Sec4}, \ref{Sec5}. In addition, in this work, we want to separately and in detail clarify the origin of such operators; we have devoted Section \ref{Sec2} to this issue. Section \ref{Sec3} serves for writing integral representations for the operators $\bf K$, $\mathcal{K}.$

%%%%%%%%%%%%%%%%%%%%%%%%%%%%%%%%%%%%%%%%%%%%%%%%%%%%%%%%%%

\section{The origin of the operator $\mathbf{K}$}\label{Sec2}
\setcounter{equation}{0}

At this point, let us step back a little and address the question of how this particular operator emerged and what its geometrical interpretation is. One commonplace approach to reducing arbitrariness in quantitative sciences consists in imposing certain covariance under a sufficiently large group of symmetries. Thus, in our case, one takes a homogeneous space $G/H$ of a group $G$ with a stabilizer subgroup $H\subset G$, and considers a measure $\mu$ on $G/H$ that is relatively invariant (also called pseudo-invariant) with respect to the $G$-action,
$$
\mu(g\cdot A)=\lambda_g\mu(A),\quad\forall g\in G,\quad\forall A\in\Sigma(G/H),
$$
where $\Sigma(G/H)$ is a certain $G$-invariant $\sigma$-algebra of measurable subsets, and
$$
G\ni g\overset{\lambda}{\longmapsto}\lambda_g\in\mathbb{R}_+
$$
is a real group character. Equivalently, one could begin with a measure space and a transitively acting subgroup of measure dilations, as thoroughly discussed in \cite{AK}. If the character $\lambda$ is trivial, $\lambda\equiv1$, then we are dealing with a $G$-invariant measure on $G/H$. More generally, the kernel of the character, $\ker\lambda\subset G$, is a normal subgroup. This leads the analysis to the following dichotomy: Case A, when $H\subset\ker\lambda$, and Case B, when $H\not\subset\ker\lambda$. It was demonstrated in \cite{AK} that in Case A, the measure $\mu$ can be slightly modified to yield a $G$-invariant measure on $G/H$, and the setting is essentially reduced to that of the standard harmonic analysis. It is in the less conventional Case B that a reduction to invariant measures is impossible. This case is characterized by the stabilizer subgroup $H$ being large relative to $G$, or in other words, the group of symmetries $G$ being excessively large for the space $G/H$.

Relatively invariant measures appear in a variety of mathematical contexts. Perhaps one of the most immediate such contexts is the (left or right) Haar measure $\mu$ on a locally compact topological group $X$ under the action of group automorphisms $\mathrm{Aut}(X)$. A further, lesser understood, and potentially rather useful such example is the Plancher\'el measure $\hat\mu$ on the unitary dual space $\hat X$ of a type I locally compact topological group $X$, under the dual action of the same group $\mathrm{Aut}(X)$ of automorphisms. In this case, the action is not transitive, so that one has to analyze each orbit separately.

If $L(G/H,\mu)$ is the space of $\Sigma(G/H)$-measurable functions on $G/H$ identified up to $\mu$-null subsets, the left quasi-regular representation $\operatorname{L}$ of $G$ on $L(G/H,\mu)$ is given by
$$
\operatorname{L}_gf(x)=f(g^{-1}x),\quad\forall g\in G,\quad\forall f\in L(G/H,\mu),\quad\mu\mbox{-a.e.}\;x\in G/H.
$$
For a vector subspace $D\subset L(G/H,\mu)$, a linear operator $\operatorname{K}:D\to L(G/H,\mu)$ is called homogeneous if
$$
\operatorname{L}_gD\subset D,\quad\operatorname{L}_g\operatorname{K}f=\operatorname{K}\operatorname{L}_gf,\quad\forall g\in G,\quad\forall f\in D.
$$
If the operator $\operatorname{K}$ is an integral operator with a measurable kernel $K\in L(G/H\times G/H,\mu^{\otimes2})$,
$$
\operatorname{K}f(x)=\int_{G/H}K(x,y)f(y)d\mu(y),\quad\mu\mbox{-a.e.}\;x\in G/H,\quad\forall f\in D,
$$
then, under natural assumptions on the expanse of $D$, it was shown (Theorem 3 in \cite{AK}) that $\operatorname{K}$ is homogeneous in the above sense if and only if the integral kernel $K$ is so-called weakly homogeneous, i.e.,
$$
(\forall g\in G)\quad\left(\mu^{\otimes2}\mbox{-a.e.}\;(x,y)\in G/H\times G/H\right)\quad K(gx,gy)=\frac1{\lambda_g}K(x,y).
$$
If the integral kernel $K$ is piecewise continuous, for example, then weak homogeneity implies the slightly stronger condition of strong homogeneity,
$$
\left(\mu^{\otimes2}\mbox{-a.e.}\;(x,y)\in G/H\times G/H\right)\quad(\forall g\in G)\quad K(gx,gy)=\frac1{\lambda_g}K(x,y).
$$
Strongly homogeneous (or simply, homogeneous) integral kernels were fully classified in Theorem 4 of \cite{AK} by building an explicit bijective correspondence between such kernels $K$ and functions $F$ that behave in a certain way with respect to the left $H$-action:
$$
K(aH,bH)=\frac{F(a^{-1}bH)}{\lambda_a},\quad\forall(aH,bH)=(x,y)\in G/H\times G/H,\quad F\in\mathcal{F}^\lambda_H,
$$
where
$$
\mathcal{F}^\lambda_H=\left\{F\in L(G/H,\mu)\;\vline\quad(\forall x\in G/H)\quad(\forall h\in H)\quad F(hx)=\frac{F(x)}{\lambda_h}\right\}.
$$
In particular, this classification implies that the dimension of the vector space of all homogeneous integral kernels equals the cardinality of the ``regular part'' of the double coset space $H\backslash G/H$. If $H$ is large, as discussed above, then this regular part consists of a few points or is empty, resulting in uniqueness or non-existence results for (non-trivial) homogeneous kernels.

It is essential to note that for topological and Lie groups, in Case B, homogeneous kernels $K$ are bound to be poorly controlled near infinity; the group $G$ is non-compact, because $H\not\subset\ker\lambda$ implies that $\lambda\not\equiv1$, whereas compact groups have only trivial characters. This becomes important when the homogeneous space $G/H$ arises as an open orbit in a larger space, with the embedding being such that infinity is mapped to a compact boundary. In such a situation, viewed from the perspective of the larger space, the kernel $K$ becomes singular.

Thus, from a carefully chosen combination of a homogeneous space $G/H$ and a relatively invariant measure $\mu$ emerges a unique homogeneous integral kernel $K$ with the respective homogeneous integral operator $\operatorname{K}$. Viewed from another perspective, for a given measure space $(M,\mu)$, the right choice of a transitively acting subgroup $G\subset\mathrm{Dil}(M,\mu)$ (if it exists) produces a unique $G$-homogeneous integral kernel $K$ and integral operator $\operatorname{K}$. One of the first examples that comes to one's mind is the punctured Euclidean space $M=\mathbb{R}^n\setminus\{0\}$ for $1<n\in\mathbb{N}$ with the Lebesgue measure $d\mu(x)=dx$. The natural candidates for the group of dilations are subgroups of linear transformations, $G\subset\mathrm{GL}(n)$, with the character being the determinant, $\lambda_g=\det g$, $\forall g\in G$. Taking the full group $G=\mathrm{GL}(n)$ is too generous; it was shown in \cite{AK} that for $n>2$ no non-trivial $\mathrm{GL}(n)$-homogeneous kernels exist, and for $n=2$ the unique (up to normalization) non-trivial $\mathrm{GL}(2)$-homogeneous kernel $K$ is strictly positive (or negative) and behaves like $\frac1r$ near the origin. While from the perspective of the punctured plane $\mathbb{R}^n\setminus\{0\}$ (which is topologically a cylinder) this is nothing extraordinary, when viewed as an integral operator acting on functions on $\mathbb{R}^2$, the corresponding integral operator $\operatorname{K}$ cannot be made to act on functions not vanishing at the origin, which is not satisfactory.

On the other hand, if we take instead only orientation-preserving linear transformations $G=\mathrm{GL}^+(n)$ (i.e., $\det g>0$), then for $n>2$ we still have no non-trivial kernels, but for $n=2$, aside from operators with strictly positive/negative kernels, we find a unique kernel which is antisymmetric, and therefore can be made sense of as an operator acting on functions over $\mathbb{R}^2$ with conditional convergence of the integral. That operator is exactly the subject of the present paper - our operator $\operatorname{K}={\bf K}$, up to a non-zero constant factor. As for $n>2$, in \cite{AK}, the question of finding suitable $G\subset\mathrm{GL}(n)$ to ensure unique existence was formulated as an open problem.

Therefore, the operator ${\bf K}$ emerges as the unique (up to normalization) integral operator with an anti-symmetric integral kernel homogeneous with respect to the group $G=\mathrm{GL}^+(2)$ of all orientation-preserving linear transformations of the plane. The anti-symmetry of the kernel $K$ allows one to define the action of the operator ${\bf K}$ on reasonably large domains by means of conditional convergence of the integral, as demonstrated in the forthcoming sections.

%%%%%%%%%%%%%%%%%%%%%%%%%%%%%%%%%%%%%%%%%%%%%%%%%%%%%%%%%%

\section{On the integral operators $\bf K$, $\mathcal{K}$. }\label{Sec3}\setcounter{equation}{0}

The integral operator ${\bf K}$ acts as (\ref{eq:UniqueIntOpn=2}) for all $f$ for which the integral makes sense conditionally. Note that if we formally restrict to $x_2=1$ and $y_2=1$ (no integration over $y_2$) then we find the Hilbert transform, up to a constant factor. Thus, we can think of the operator ${\bf K}$ as an extension of the Hilbert transform to $\mathbb{R}^2$, which arises naturally as the unique (up to a constant factor) operator with an antisymmetric integral kernel homogeneous with respect to all orientation-preserving linear transformations.

The operator (\ref{eq:UniqueIntOpn=2}), after a change of variables inside the integrals, can be written in the form
\begin{eqnarray}\label{eq:UniqueIntOpn=2.a}
{\bf K}f(x_1,x_2)&=&-\frac{1}{\pi x_1}\int\limits_{\mathbb{R}}\frac{d\xi}{\frac{x_2}{x_1}-\xi}\int\limits_{\mathbb{R}}f(\eta,\xi\eta)d\eta\\&=&
-\frac{1}{\pi x_1}\int\limits_{\mathbb{R}}\left(\frac{1}{\frac{x_2}{x_1}-\xi}\frac{1}{\sqrt{1+\xi^2}}\int\limits_{\mathcal{L}_\xi}f(s)dl(s)\right)d\xi,
\nonumber
\end{eqnarray}
where $$\mathcal{L}_\xi=\{(t_1,t_2)\in\mathbb{R}^2: t_1\in\mathbb{R}, t_2=\xi t_1\}$$ is the line in the plane depending on
$\xi\in\mathbb{R}$, and $dl$ is the length element. Therefore, at least formally, the operator ${\bf K}$ is a composition up to a
multiplier $-\frac{1}{x_1}$ of the Hilbert ($\mathcal{H}$) and Radon ($\mathcal{R}$) transforms,
\begin{equation}\label{eq:UniqueIntOpn=2.b}
{\bf K}f(x_1,x_2)=-\frac{1}{x_1}
\mathcal{H}\left[\frac{1}{\sqrt{1+\xi^2}}\left(\mathcal{R}f\right)(\mathcal{L}_\xi)\right]\left(\frac{x_2}{x_1}\right)\, .
\end{equation}

Another representation of the integral operator \eqref{eq:UniqueIntOpn=2} suggested by V. D. Stepanov in personal communications (in 2021) is as follows:
\begin{eqnarray*}\label{eq:UniqueIntOpn=2.c}
{\bf K}f(x_1,x_2)&=&\frac{1}{\pi}\int_{\mathbb{R}}\int_{\mathbb{R}}\frac{f(y_1,y_2)}{x_1y_2-x_2y_1}dy_1dy_2
\\&=&\frac{1}{\pi}\frac{1}{x_1 x_2}\int_{\mathbb{R}}\int_{\mathbb{R}}\frac{f(y_1,y_2)}{\frac{y_2}{x_2}-\frac{y_1}{x_1}}dy_1dy_2
\\&=&\frac{1}{\pi}\int_{\mathbb{R}}\int_{\mathbb{R}}\frac{f(x_1 t_1,x_2t_2)}{t_2-t_1}dt_1dt_2.
\end{eqnarray*}
Now if
\begin{eqnarray*}
f_{x_1,x_2}(\cdot, t_2) &=& f(x_1(\cdot),x_2t_2),\\
Hf(x_1,x_2;t_2) &=& p.v. \int_{\mathbb{R}} \frac{f_{x_1,x_2}(t_1, t_2)}{t_2-t_1} dt_1 ,
\end{eqnarray*}
then
\begin{eqnarray*}
{\bf K}f(x_1,x_2)&=&\frac{1}{\pi}\int_{\mathbb{R}}Hf(x_1,x_2;t_2)dt_2 .
\end{eqnarray*}

And, finally, the representation related to polar coordinates in the plane is of use as well. Assuming that $$y_1=\rho \cos\theta,\,\,\,\, y_2=\rho\sin\theta$$
and
$$x_1=r\cos\alpha,\,\,\,\, x_2=r\sin\alpha,$$ we come (at least formally)  to the integral operator $\mathcal{K}$ acting on functions $\varphi=\varphi(r,\alpha)$ as in (\ref{eq:polarK}), that is:
\begin{equation*}
\mathcal{K}\varphi(r,\alpha) = \frac{1}{\pi r} \int_0^\infty d\rho\int_{-\pi}^{\pi}\frac{\varphi(\rho,\theta)}{\sin(\theta-\alpha)}d\theta .
\end{equation*}
Here the function $\varphi$ is related to $f$ by
$$\varphi(\rho,\theta)=f(\rho \cos\theta,\rho\sin\theta).$$
Thus, at least formally, the connection between ${\bf K}$ and $\mathcal{K}$ is as follows:
$$
(S{\bf K}f)(r,\alpha)=\frac{1}{\pi r} \int_0^\infty d\rho\int_{-\pi}^{\pi}\frac{(Sf)(\rho, \theta)}{\sin(\theta-\alpha)}d\theta,
$$
where
$$
(Sf)(\rho, \theta):=f(\rho \cos\theta,\rho\sin\theta),
$$
i. e.,
$$
S{\bf K}= \mathcal{K}S.
$$
In other words $S$  intertwines  ${\bf K}$   and $\mathcal{K}$.
 Since $S$ is an injection, one can write
$$
{\bf K}=S^{-1}\mathcal{K}S.
$$

\section{Boundedness of the operator $\mathcal{K}$ in tensor products}\label{Sec4}\setcounter{equation}{0}

We start our study with the operator (\ref{eq:polarK}), which is in some sense friendlier to deal with; in fact, we begin with the following, even more basic operator,
\begin{equation}\label{eq:polarK_2}
\mathcal{K}_2\varphi(\alpha) = \frac{1}{\pi }\int_0^\infty d\rho\int_{-\pi}^{\pi}\frac{\varphi(\rho,\theta)}{\sin(\theta-\alpha)}d\theta .
\end{equation}

To examine the boundedness of the operator $\mathcal{K}_2$ we need some auxiliary results.

First note that on the functions of the type
$$\varphi_0(\rho,\theta)=\varphi_1\otimes\varphi_2(\rho,\theta)=\varphi_1(\rho)\varphi_2(\theta)$$
the operator $\mathcal{K}_2$ acts as
\begin{eqnarray}\label{eq:polarK2}
\mathcal{K}_2\varphi_0(\alpha) &=&
\left(\int_0^\infty \varphi_1(\rho) d\rho\right) \left(\frac{1}{\pi}\int_{-\pi}^{\pi}\frac{\varphi_2(\theta)}{\sin(\theta-\alpha)}d\theta\right).
\nonumber
\end{eqnarray}

Consider the operator ($\phi$ stands for a $2\pi$-periodic function)
\begin{eqnarray}\label{eq:K_2}
\mathcal{K}_1\phi(\alpha) &:=& \frac{1}{\pi}\int_{-\pi}^{\pi}\frac{\phi(\theta)}{\sin(\theta-\alpha)}d\theta\\ \nonumber
&=& -\frac{1}{\pi}\int_{\alpha+\pi}^{\alpha-\pi}\frac{\phi(\alpha-t)}{\sin t}dt\\
\nonumber
&=&
\frac{1}{\pi}\int_{-\pi}^{\pi}\frac{\phi(\alpha-t)}{\sin t}dt.
\end{eqnarray}
Here and below we consider the principal value of the integral
$$
\int_{-\pi}^{\pi}f(t)dt:=\lim_{\varepsilon\to 0+}\left(\int_{-\pi+\varepsilon}^{-\varepsilon}f(t)dt+\int_{\varepsilon}^{\pi-\varepsilon}f(t)dt\right)
$$
with singularities $t=0$ and $t=\pm\pi$.

\begin{lemma}\label{lem:K_1_in_L2} The operator $\mathcal{K}_1$ acts on functions $e_k:=\frac{1}{\sqrt{2\pi}}e^{ikt}$ as follows:
\begin{eqnarray*}
\mathcal{K}_1 e_k &=& -2i e_k,\,\,\,\,\,\,\, k = 2m+1, \, m\in\mathbb{Z},\\
\mathcal{K}_1 e_k &=& 0,\,\,\,\,\,\,\,\,\,\,\,\,\,\,\,\,\,\,\,\, k = 2m, \, m\in\mathbb{Z}.
\end{eqnarray*}

In particular, the operator  $\mathcal{K}_1$ is bounded on $L^2[-\pi,\pi]$ and $\|\mathcal{K}_1\|_{L^2\to L^2}=2$.
\end{lemma}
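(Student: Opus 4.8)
The plan is to treat $\mathcal{K}_1$ as a principal-value convolution on the circle and to diagonalize it in the Fourier basis. Using the last representation in \eqref{eq:K_2}, namely $\mathcal{K}_1\phi(\alpha)=\frac1\pi\int_{-\pi}^{\pi}\frac{\phi(\alpha-t)}{\sin t}\,dt$, the operator is convolution against the odd, $2\pi$-periodic kernel $1/(\pi\sin t)$, so the characters $e_k$ are forced to be eigenfunctions. Concretely, I would insert $\phi=e_k$, factor out $e_k(\alpha)=\frac{1}{\sqrt{2\pi}}e^{ik\alpha}$, and thereby reduce the statement to evaluating the scalar multiplier
\[
\mu_k=\frac1\pi\,\mathrm{p.v.}\!\int_{-\pi}^{\pi}\frac{e^{-ikt}}{\sin t}\,dt ,
\]
so that $\mathcal{K}_1 e_k=\mu_k e_k$.

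To compute $\mu_k$ I would split $e^{-ikt}=\cos(kt)-i\sin(kt)$. The term $\cos(kt)/\sin t$ is odd in $t$, and a symmetric principal value taken simultaneously at the three singular points $t=0,\pm\pi$ annihilates it; the surviving term $\sin(kt)/\sin t$ is even and has only removable singularities there, so its integral is ordinary. This reduces everything to the real quantity $I_k=\int_0^{\pi}\frac{\sin(kt)}{\sin t}\,dt$, with $\mu_k=-\frac{2i}{\pi}I_k$ of modulus $|\mu_k|=\frac{2}{\pi}|I_k|$ and the sign fixed by the orientation of the principal value, giving the stated value $\mu_k=-2i$ on the relevant modes.

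The integrals $I_k$ are then evaluated by a Dirichlet-kernel recurrence: from $\sin((k{+}1)t)-\sin((k{-}1)t)=2\cos(kt)\sin t$ one obtains $I_{k+1}-I_{k-1}=2\int_0^\pi\cos(kt)\,dt=0$ for every integer $k\ge1$, while $I_1=\pi$ and $I_2=\int_0^\pi 2\cos t\,dt=0$ by direct computation. Hence $I_k=\pi$ for odd $k$ and $I_k=0$ for even $k$ (with $I_{-k}=-I_k$ for negative indices), which yields the eigenvalues $\mathcal{K}_1 e_k=-2i\,e_k$ on odd modes and $\mathcal{K}_1 e_k=0$ on even modes, and in particular $|\mu_k|\in\{0,2\}$.

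Finally, since $\{e_k\}_{k\in\mathbb{Z}}$ is an orthonormal basis of $L^2[-\pi,\pi]$ and $\mathcal{K}_1$ acts diagonally on it with eigenvalues of modulus at most $2$, it extends to a bounded (indeed normal) operator whose norm is the supremum of the moduli of the eigenvalues, $\|\mathcal{K}_1\|_{L^2\to L^2}=\sup_k|\mu_k|=2$, the value being attained on any odd mode such as $e_1$. The step I expect to demand the most care is the rigorous handling of the principal value at the three singularities $t=0,\pm\pi$ at once: justifying that the convolution is well defined on the dense span of the $e_k$, that the odd part genuinely integrates to zero, and keeping the signs straight. Once the recurrence is in place, the evaluation of $I_k$ and the passage from the dense span to all of $L^2$ are routine.
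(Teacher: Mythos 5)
Your proof follows essentially the same route as the paper's: insert $e_k$ into the convolution form, use the symmetry of the principal value to reduce to the Dirichlet-type integral $-\frac{2i}{\pi}\int_0^\pi\frac{\sin kt}{\sin t}\,dt$, and read off the norm from the diagonal action on the orthonormal basis; your recurrence for $I_k$ merely fills in a computation the paper leaves implicit. One small remark: your own relation $I_{-k}=-I_k$ gives $\mu_k=+2i$ for negative odd $k$, so the eigenvalue on odd modes is $\mp 2i$ depending on the sign of $k$ (the paper's statement of the lemma has the same slip), but since $|\mu_k|\in\{0,2\}$ in all cases this does not affect the boundedness or the identity $\|\mathcal{K}_1\|_{L^2\to L^2}=2$.
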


\begin{proof}
Given $e_k:=\frac{1}{\sqrt{2\pi}}e^{ikt}$ where $k\in \mathbf{Z}$, we have
\begin{eqnarray}\label{eq:K_1_phi}
\mathcal{K}_1 e_k(\alpha)&=&
\frac{1}{\pi}\int_{-\pi}^{\pi}\frac{e_k(\alpha-t)}{\sin t}dt\\ \nonumber
&=&\frac{1}{\pi\sqrt{2\pi}}\lim_{\varepsilon\to 0+}\left(\int_{-\pi+\varepsilon}^{-\varepsilon}\frac{e^{ik(\alpha-t)}}{\sin t}dt+\int_{\varepsilon}^{\pi-\varepsilon}\frac{e^{ik(\alpha-t)}}{\sin t}dt\right)\\ \nonumber
&=&\frac{1}{\pi}e_k(\alpha)\lim_{\varepsilon\to 0+}\left(\int_{-\pi+\varepsilon}^{-\varepsilon}\frac{e^{-ikt}}{\sin t}dt+\int_{\varepsilon}^{\pi-\varepsilon}\frac{e^{-ikt}}{\sin t}dt\right)\\ \nonumber
&=&\frac{1}{\pi}e_k(\alpha)\lim_{\varepsilon\to 0+}\left(\int_{\pi-\varepsilon}^{\varepsilon}\frac{e^{i kt}}{\sin t}dt+\int_{\varepsilon}^{\pi-\varepsilon}\frac{e^{-i kt}}{\sin t}dt\right)\\ \nonumber
&=&\frac{1}{\pi}e_k(\alpha)\lim_{\varepsilon\to 0+}\int_{\varepsilon}^{\pi-\varepsilon}\frac{e^{-i kt}-e^{i kt}}{\sin t}dt\\ \nonumber
&=&-\frac{2 i}{\pi}e_k(\alpha)\int_{0}^{\pi}\frac{\sin kt}{\sin t}dt.
\end{eqnarray}
Finally, $\mathcal{K}_1e_k=-2i e_k$ if $k$ is odd and $\mathcal{K}_1e_k=0$ if $k$ is even.

Since the sequence $(e_k)_{k\in \mathbf{Z}}$ is an orthonormal basis of $L^2[-\pi,\pi]$, the last statement follows.
\end{proof}

Now we are in a position to prove the following result \footnote{The projective tensor product $L^1(\mathbb{R}_+)\hat\otimes_\pi L^2[-\pi,\pi]$  can be
identified with the  Banach space $L^1(\mathbb{R}_+,L^2[-\pi,\pi])$ of  equivalence
classes of Bochner integrable functions $f:\mathbb{R}_+\to L^2[-\pi,\pi]$ \cite[p. 29]{Ryan}. }.

\begin{theorem}\label{K2_F} Let $F$ be a Banach lattice of $2\pi$-periodic measurable functions on $\mathbb{R}$ such that constants belong to $F$. Then
$\mathcal{K}_2$ is bounded as an operator between $L^1(\mathbb{R}_+)\hat\otimes_\pi L^2[-\pi,\pi]$ and $F$ and its norm does not exceed $2\|1\|_F$.

\end{theorem}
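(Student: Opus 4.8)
The plan is to prove the estimate first on the algebraic tensor product $L^1(\mathbb{R}_+)\otimes L^2[-\pi,\pi]$, which is dense in the projective tensor product, and then to invoke the universal property of $\hat\otimes_\pi$: a bilinear map $B\colon L^1(\mathbb{R}_+)\times L^2[-\pi,\pi]\to F$ satisfying $\|B(\varphi_1,\varphi_2)\|_F\le C\,\|\varphi_1\|_{L^1}\,\|\varphi_2\|_{L^2}$ extends uniquely to a bounded linear map on $L^1(\mathbb{R}_+)\hat\otimes_\pi L^2[-\pi,\pi]$ with norm at most $C$. Hence it suffices to establish the bilinear bound with $C=2\|1\|_F$ on elementary tensors $\varphi_1\otimes\varphi_2$ and then pass to the completion by density.

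On an elementary tensor the displayed action of $\mathcal{K}_2$ on product functions gives the factorization $\mathcal{K}_2(\varphi_1\otimes\varphi_2)=\bigl(\int_0^\infty\varphi_1(\rho)\,d\rho\bigr)\,\mathcal{K}_1\varphi_2$, where $\mathcal{K}_1$ is the one-variable operator \eqref{eq:K_2}. Since $\bigl|\int_0^\infty\varphi_1(\rho)\,d\rho\bigr|\le\|\varphi_1\|_{L^1(\mathbb{R}_+)}$, the radial factor is immediately controlled, and the whole problem collapses to the single angular estimate $\|\mathcal{K}_1\varphi_2\|_F\le 2\|1\|_F\,\|\varphi_2\|_{L^2}$ for every $\varphi_2\in L^2[-\pi,\pi]$.

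The crux is this last inequality, and here the two hypotheses on $F$ — that it is a Banach lattice and that it contains the constants — must work in tandem. The natural mechanism is a pointwise domination: if one can show $|\mathcal{K}_1\varphi_2(\alpha)|\le 2\|\varphi_2\|_{L^2}$ for a.e. $\alpha$, then the constant function $2\|\varphi_2\|_{L^2}\cdot 1$ dominates $|\mathcal{K}_1\varphi_2|$ pointwise, and the lattice property yields $\|\mathcal{K}_1\varphi_2\|_F\le\bigl\|\,2\|\varphi_2\|_{L^2}\cdot 1\,\bigr\|_F=2\|\varphi_2\|_{L^2}\,\|1\|_F$, which is exactly the required bound and explains the precise factor $2\|1\|_F$. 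The quantitative input one expects to exploit is Lemma~\ref{lem:K_1_in_L2}, i.e.\ the spectral description $\mathcal{K}_1 e_k=-2i\,e_k$ for odd $k$ and $\mathcal{K}_1 e_k=0$ for even $k$, together with $\|\mathcal{K}_1\|_{L^2\to L^2}=2$, used to tame the principal-value integral defining $\mathcal{K}_1$.

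I expect the main obstacle to lie precisely in obtaining the uniform pointwise control of the singular (principal value) integral $\mathcal{K}_1\varphi_2(\alpha)=\frac1\pi\,\mathrm{p.v.}\!\int_{-\pi}^{\pi}\frac{\varphi_2(\theta)}{\sin(\theta-\alpha)}\,d\theta$ by the $L^2$-norm of $\varphi_2$ alone; by contrast, the $L^1(\mathbb{R}_+)$-integration and the passage from the algebraic to the projective tensor product are routine. Once the angular estimate is secured, one assembles the bound on a general elementary tensor, sums it over a finite representation $\sum_n \varphi_1^{(n)}\otimes\varphi_2^{(n)}$ using the triangle inequality in $F$, takes the infimum over all such representations to recover the projective norm, and finally extends by density to all of $L^1(\mathbb{R}_+)\hat\otimes_\pi L^2[-\pi,\pi]$, concluding $\|\mathcal{K}_2\|\le 2\|1\|_F$.
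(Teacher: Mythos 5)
You have faithfully reconstructed the architecture of the paper's proof: reduce to elementary tensors via the universal property of $\hat\otimes_\pi$, factor $\mathcal{K}_2(\varphi_1\otimes\varphi_2)=\bigl(\int_0^\infty\varphi_1\bigr)\,\mathcal{K}_1\varphi_2$, and dominate $|\mathcal{K}_1\varphi_2|$ pointwise by $2\|\varphi_2\|_{L^2}$ so that the lattice hypothesis converts this into $\|\mathcal{K}_1\varphi_2\|_F\le 2\|1\|_F\|\varphi_2\|_{L^2}$. But you leave the pointwise inequality $|\mathcal{K}_1\varphi_2(\alpha)|\le 2\|\varphi_2\|_{L^2}$ unproven, only flagging it as the expected obstacle, so the proposal is incomplete exactly where the content lies. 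Worse, that inequality is false, so the gap cannot be closed. By Lemma~\ref{lem:K_1_in_L2}, $\mathcal{K}_1$ multiplies each odd-frequency exponential $e_k$ by a factor of modulus $2$; hence the trigonometric polynomials $\psi_N=N^{-1/2}\sum_{m=1}^{N}e_{2m+1}$ satisfy $\|\psi_N\|_{L^2}=1$ while $\mathcal{K}_1\psi_N(0)=-2i\sqrt{N/(2\pi)}$, so $|\mathcal{K}_1\psi_N(0)|\to\infty$. Point evaluation of $\mathcal{K}_1$ is simply not continuous in the $L^2$ norm: its formal Riesz representative has odd-frequency Fourier coefficients of constant modulus $2/\sqrt{2\pi}$ and therefore does not lie in $L^2$. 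Choosing $\varphi_1\ge0$ with $\|\varphi_1\|_{L^1}=\int_0^\infty\varphi_1=1$, the tensors $\varphi_1\otimes\psi_N$ have projective norm $1$ but $\|\mathcal{K}_2(\varphi_1\otimes\psi_N)\|_{L^\infty}\to\infty$; since $L^\infty$ is a Banach lattice of $2\pi$-periodic measurable functions containing the constants, the asserted conclusion itself fails for $F=L^\infty$.

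You should also know that the paper's own proof has the same defect: it asserts $|\mathcal{K}_1\psi_1(\alpha)|\le2\|\psi_1\|_{L^2}$ for $\psi_1\in C[-\pi,\pi]$ ``by Lemma~\ref{lem:K_1_in_L2}'', but that lemma only supplies the operator norm $\|\mathcal{K}_1\|_{L^2\to L^2}=2$, i.e., control of $\|\mathcal{K}_1\psi_1\|_{L^2}$, which says nothing about individual values. So your diagnosis of where the difficulty sits is exactly right; what is missing is the realization that the difficulty is fatal rather than technical. A correct statement requires either an additional hypothesis such as a continuous embedding $L^2[-\pi,\pi]\hookrightarrow F$ (with the bound $2\|\iota\|_{L^2\to F}$ in place of $2\|1\|_F$), or a smaller second tensor factor on which $\mathcal{K}_1$ genuinely admits a uniform pointwise bound, as in the H\"older-space result of Theorem~\ref{L_Lambda} via Corollary~\ref{cor:K2alpha}.
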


\begin{proof} Let $\varphi_1\in L^1(\mathbb{R}_+)$ and $\psi_1\in C[-\pi,\pi]$. Then for all $\alpha\in[-\pi,\pi]$
$$
\mathcal{K}_2(\varphi_1\otimes \psi_1)(\alpha)=\left(\int_0^\infty \varphi_1(\rho)d\rho\right) \mathcal{K}_1\psi_1(\alpha).
$$
Therefore Lemma \ref{lem:K_1_in_L2} yields that
\begin{eqnarray*}%\label{est:K_2_L1_L2}
|\mathcal{K}_2(\varphi_1\otimes \psi_1)(\alpha)|&\le &\int_0^\infty |\varphi_1(\rho)|d\rho|\mathcal{K}_1\psi_1(\alpha)|\\ \nonumber
&\le&2\|\varphi_1\|_{L^1}\|\psi_1\|_{L^2}.
\end{eqnarray*}
It follows that for an arbitrary function from  the algebraic tensor product $L^1(\mathbb{R}_+)\otimes C[-\pi,\pi]$ of the form $\varphi=\sum_{i=1}^n \varphi_i\otimes \psi_i$ with $\varphi_i\in L^1(\mathbb{R}_+)$ and $\psi_i\in C[-\pi,\pi]$  and for all $\alpha\in[-\pi,\pi]$ one has
\begin{eqnarray*}%\label{est:K_2_alpha}
|\mathcal{K}_2(\varphi)(\alpha)|\le 2\sum_{i=1}^n\|\varphi_i\|_{L^1}\|\psi_i\|_{L^2}.
\end{eqnarray*}
So, for every $\alpha\in[-\pi,\pi]$ the map
$\varphi\mapsto \mathcal{K}_2\varphi(\alpha)$ is a  bounded linear functional on $L^1(\mathbb{R}_+)\otimes C[-\pi,\pi]$ as a subspace of $L^1(\mathbb{R}_+)\hat\otimes_\pi L^2[-\pi,\pi]$ and its norm does not exceed $2$. In turn, it follows that
$\mathcal{K}_2$ is bounded as an operator between $L^1(\mathbb{R}_+)\otimes C[-\pi,\pi]$ and $F$ and its norm does not exceed $2\|1\|_F$.
Since $L^1(\mathbb{R}_+)\otimes C[-\pi,\pi]$ is dense in $L^1(\mathbb{R}_+)\hat\otimes_\pi C[-\pi,\pi]$ (here we consider $C[-\pi,\pi]$ as a normed subspace of $ L^2[-\pi,\pi]$) and the latter is dense in $L^1(\mathbb{R}_+)\hat\otimes_\pi L^2[-\pi,\pi]$ (see, e.g., \cite[Corollary 13.4, p. 163]{DF}),
 the result follows.
\end{proof}

Let $\Lambda_\gamma$ stand for the H\"{o}lder space of $2\pi$-periodic functions on $\mathbb{R}$ with exponent  $\gamma$ ($0<\gamma<1$), and let $\|\cdot\|_{\Lambda_\gamma}$ stand for the corresponding semi-norm.

We will consider the Hilbert transform in the following form.
\begin{equation}\label{HT-1}
\mathcal{H}\phi(\alpha)=\frac{1}{2\pi}\int_{-\pi}^{\pi}\frac{\phi(\alpha-t)}{\tan \frac{t}{2}}dt
\end{equation}

Although, in the literature sometimes
it is more convenient to call
\begin{equation}\label{HT-2}
\mathrm{H}\phi(\alpha)=\frac{1}{\pi}\int_{0}^{\pi}\frac{\phi(\alpha-t)}{\alpha-\tau}d\tau
\end{equation} the Hilbert transform,
the difference between (\ref{HT-1}) and (\ref{HT-2}) is a convolution operator with a continuous kernel, see \cite{G-1981}, page 105.

\begin{lemma}\label{lm:K_2} The integral in \eqref{eq:K_2} exists for $\phi$ in the H\"{o}lder space $\Lambda_\gamma$ of $2\pi$-periodic functions on $\mathbb{R}$ with exponent  $\gamma$ ($0<\gamma<1$).
\end{lemma}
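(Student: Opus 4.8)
The plan is to prove that the principal-value integral in \eqref{eq:K_2} converges by isolating the three singular points $t=0$ and $t=\pm\pi$ of the kernel $1/\sin t$ on $[-\pi,\pi]$ and treating each separately, the integrand being bounded and continuous away from them. Fix $\alpha$ and a small $\delta\in(0,\pi/2)$, and split $[-\pi,\pi]$ into the region $A=[-\delta,\delta]$ around $t=0$, the region $B=[-\pi,-\pi+\delta]\cup[\pi-\delta,\pi]$ around $t=\pm\pi$, and the complementary region $C$ bounded away from all singularities. On $C$ the function $t\mapsto\phi(\alpha-t)/\sin t$ is continuous, since $\phi\in\Lambda_\gamma$ is in particular continuous and $\sin t$ is bounded away from zero, so that contribution is an ordinary Lebesgue integral and poses no difficulty.

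Near $t=0$ I would use the standard cancellation trick, writing $\phi(\alpha-t)=\bigl(\phi(\alpha-t)-\phi(\alpha)\bigr)+\phi(\alpha)$. The H\"older estimate $|\phi(\alpha-t)-\phi(\alpha)|\le\|\phi\|_{\Lambda_\gamma}|t|^\gamma$ gives $\bigl|(\phi(\alpha-t)-\phi(\alpha))/\sin t\bigr|\le C|t|^{\gamma-1}$, which is integrable on $(-\delta,\delta)$ because $\gamma>0$; hence this part converges absolutely. The remaining contribution is $\phi(\alpha)$ times the principal value of $\int 1/\sin t\,dt$ over the symmetric punctured interval, and this vanishes for every $\varepsilon$ by the oddness of $1/\sin t$ (the substitution $t\mapsto-t$ shows $\int_{-\delta}^{-\varepsilon}=-\int_{\varepsilon}^{\delta}$). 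Thus the limit over $A$ exists.

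The genuinely delicate point is the pair of singularities at $t=\pm\pi$, which \eqref{eq:K_2} excises by symmetric neighborhoods with the same $\varepsilon$. Here I would combine the two pieces $\int_{\pi-\delta}^{\pi-\varepsilon}$ and $\int_{-\pi+\varepsilon}^{-\pi+\delta}$, substitute $t=\pi-s$ and $t=-\pi+s$ respectively, and use $\sin(\pi-s)=\sin s$, $\sin(-\pi+s)=-\sin s$ together with the $2\pi$-periodicity $\phi(\alpha+\pi-s)=\phi(\alpha-\pi-s)$ to rewrite their sum as the single integral $\int_{\varepsilon}^{\delta}\bigl(\phi(\beta+s)-\phi(\beta-s)\bigr)/\sin s\,ds$ with $\beta=\alpha-\pi$. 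The H\"older condition then bounds the integrand by $C\,s^{\gamma-1}$, so the combined integral converges absolutely as $\varepsilon\to0+$. This matching of the symmetric principal-value prescription with the periodicity of $\phi$ is the main obstacle, and it is precisely what makes the two endpoint singularities behave like a single interior one. Adding the three regions shows that the limit defining \eqref{eq:K_2} exists for every $\alpha$, which is the assertion.

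As an alternative route one may decompose the kernel via $\frac{1}{\sin t}=\frac12\bigl(\cot\frac t2+\tan\frac t2\bigr)$: the first term reproduces the Hilbert transform \eqref{HT-1}, whose principal value is classically known to exist on $\Lambda_\gamma$, while the second term has its only singularities at $t=\pm\pi$ and, after the substitution $t=\pi-s$, again reduces to a Hilbert transform of (a shift of) $\phi$. I would present the direct splitting above as the primary argument, since it is self-contained, and mention this decomposition as a conceptual cross-check against the Hilbert transform.
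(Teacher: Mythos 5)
Your proposal is correct, but your primary argument takes a genuinely different route from the paper's. The paper uses precisely the decomposition you relegate to a ``cross-check'': it writes $\tfrac1{\sin t}=\tfrac12\cot\tfrac t2+\tfrac12\tan\tfrac t2$, so that $\mathcal{K}_1=\mathcal{H}+\mathcal{J}$ with $\mathcal{J}\phi(\alpha)=\tfrac1{2\pi}\int_{-\pi}^{\pi}\phi(\alpha-t)\tan\tfrac t2\,dt$; the Hilbert part exists on $\Lambda_\gamma$ by Privalov's theorem, and the $\tan\tfrac t2$ part is tamed by subtracting $\phi(\alpha\mp\pi)$ (using the vanishing of the symmetric principal value of $\int\tan\tfrac t2\,dt$ and the periodicity $\phi(\alpha-\pi)=\phi(\alpha+\pi)$) and then applying the H\"older bound to get absolute convergence at $t=\pm\pi$. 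Your direct three-region splitting is self-contained and avoids invoking Privalov: at $t=0$ you use the standard odd-kernel cancellation against $\phi(\alpha)$, and at $t=\pm\pi$ your pairing of the two endpoint pieces via $t=\pi-s$, $t=-\pi+s$ correctly exploits the fact that the principal-value prescription in \eqref{eq:K_2} uses the same $\varepsilon$ at both endpoints, turning the pair into the absolutely convergent difference $\int_\varepsilon^\delta\bigl(\phi(\beta+s)-\phi(\beta-s)\bigr)/\sin s\,ds$ with $\beta=\alpha-\pi$ --- a nice observation that the two boundary singularities of the periodic kernel jointly behave like one interior principal-value singularity. What the paper's decomposition buys, and what your primary argument does not directly deliver, is the quantitative split $\mathcal{K}_1=\mathcal{H}+\mathcal{J}$ with the explicit bound \eqref{eq:est-}--\eqref{eq:est+} on $\mathcal{J}$, which is reused immediately in Corollary \ref{cor:K2alpha} and Corollary \ref{cor:J} to produce the constant $c_\gamma$; if you wanted those later estimates you would have to extract uniform constants from your splitting (which is possible, since all your bounds are effective in $\|\phi\|_{\Lambda_\gamma}$ and $\|\phi\|_\infty$, but is not done as stated).
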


\begin{proof} Indeed,
\begin{eqnarray}\label{eq:K_21}
\mathcal{K}_1\phi(\alpha)&=&\frac{1}{\pi}\int_{-\pi}^{\pi}\frac{\phi(\alpha-t)}{\sin t}dt\\ \nonumber
&=&\frac{1}{\pi}\int_{-\pi}^{\pi}\frac{\phi(\alpha-t)}{\frac{2\tan t/2}{1+\tan^2 t/2}}dt=\mathcal{H}\phi(\alpha)+\mathcal{J}\phi(\alpha) ,
\end{eqnarray}
where the Hilbert transform
$
\mathcal{H}\phi
$
exists for $\phi\in \Lambda_\gamma$ by the Privalov theorem (see, e.g., \cite[Section 6.16]{King}), and
\begin{equation}\label{eq:J}
\mathcal{J}\phi(\alpha) := \frac{1}{2\pi}\int_{-\pi}^{\pi}\phi(\alpha-t)\tan\frac{t}{2}dt.
\end{equation}

Since $$\int_{-\pi}^{\pi}\tan(t/2)dt=0$$
and  $\phi(\alpha-\pi)=\phi(\alpha+\pi)$, one can rewrite the last expression as follows:
\begin{eqnarray}\label{eq:J1}
\mathcal{J}\phi(\alpha) &= &\frac{1}{2\pi}\int_{-\pi}^{\pi}(\phi( \alpha-t)-\phi(\alpha-\pi))\tan\frac{t}{2}dt\\ \nonumber
&=&\frac{1}{2\pi}\int_{0}^{\pi}(\phi( \alpha-t)-\phi(\alpha-\pi))\tan\frac{t}{2}dt\\ \nonumber
&+&\frac{1}{2\pi}\int_{-\pi}^{0}(\phi( \alpha-t)-\phi(\alpha+\pi))\tan\frac{t}{2}dt.
\end{eqnarray}

To examine the behavior of the integral \eqref{eq:J} near the singularity $t=\pi$, consider the first term in the representation \eqref{eq:J1}.
We have
$$
|\phi(\alpha-t)-\phi(\alpha-\pi)|\le \|\phi\|_{\Lambda_\gamma}|t-\pi|^\gamma.
$$
Since $|\tan t/2|\le 2/(\pi-|t|)$, for $|t|<\pi$, this yields for $t\in [0,\pi)$
\begin{equation}\label{eq:est-}
|(\phi(\alpha-t)-\phi(\alpha-\pi))\tan t/2|\le 2\|\phi\|_{\Lambda_\gamma}(\pi-t)^{\gamma-1}.
\end{equation}
So, the integral \eqref{eq:J} converges at $t=\pi$.

The case of  the singularity $t=-\pi$ can be  considered in a similar way by means of the representation \eqref{eq:J1} and the following inequality, in which $t\in (-\pi,0]$:
\begin{equation}\label{eq:est+}
|(\phi(\alpha-t)-\phi(\alpha+\pi))\tan t/2|\le 2\|\phi\|_{\Lambda_\gamma}(\pi+t)^{\gamma-1}.
\end{equation}
This finishes the proof.
\end{proof}

\begin{corollary}\label{cor:K2alpha} For all even $\phi\in \Lambda_\gamma$ and  for all $\alpha\in[-\pi,\pi]$
\begin{equation}\label{Est-K-alpha}
|\mathcal{K}_1\phi(\alpha)|\le c_\gamma \|\phi\|_{\Lambda_\gamma}
\end{equation}
where $c_\gamma=\gamma^{-1} \pi^{\gamma-1}+ \|\mathcal{H}\|_{\Lambda_\gamma\to \Lambda_\gamma}(2\pi)^\gamma$.
\end{corollary}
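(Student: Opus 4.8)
The plan is to start from the splitting $\mathcal{K}_1\phi=\mathcal{H}\phi+\mathcal{J}\phi$ obtained in \eqref{eq:K_21} during the proof of Lemma~\ref{lm:K_2}, and to estimate each of the two summands pointwise on $[-\pi,\pi]$. The bound on $\mathcal{J}\phi$ will account for the term $\gamma^{-1}\pi^{\gamma-1}$, while the bound on $\mathcal{H}\phi$ will account for the term $\|\mathcal{H}\|_{\Lambda_\gamma\to\Lambda_\gamma}(2\pi)^{\gamma}$; their sum is exactly $c_\gamma$.

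For the elementary piece $\mathcal{J}\phi$ from \eqref{eq:J}, I would simply reuse the work already done in Lemma~\ref{lm:K_2}. There the integrand was rewritten via the representation \eqref{eq:J1} and controlled near its only singularities $t=\pm\pi$ by the pointwise estimates \eqref{eq:est-} and \eqref{eq:est+}, which majorize it by $2\|\phi\|_{\Lambda_\gamma}(\pi\mp t)^{\gamma-1}$. Integrating $(\pi-t)^{\gamma-1}$ over $[0,\pi]$ (and symmetrically $(\pi+t)^{\gamma-1}$ over $[-\pi,0]$) gives $\pi^{\gamma}/\gamma$, so that after collecting the prefactor $1/(2\pi)$ from \eqref{eq:J} one arrives at a bound of the form $\gamma^{-1}\pi^{\gamma-1}\|\phi\|_{\Lambda_\gamma}$. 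This step is routine and uses neither the evenness of $\phi$ nor any fine property of the kernel.

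The substantive point is the treatment of $\mathcal{H}\phi$. By Privalov's theorem the Hilbert transform \eqref{HT-1} is bounded on $\Lambda_\gamma$, giving $\|\mathcal{H}\phi\|_{\Lambda_\gamma}\le\|\mathcal{H}\|_{\Lambda_\gamma\to\Lambda_\gamma}\|\phi\|_{\Lambda_\gamma}$; however, since $\|\cdot\|_{\Lambda_\gamma}$ is only a seminorm, this controls the oscillation of $\mathcal{H}\phi$ but not its value at a given point. The missing ingredient, and precisely the place where the hypothesis on $\phi$ enters, is a point at which $\mathcal{H}\phi$ vanishes. Here I would observe that the convolution kernel $1/\tan(t/2)$ in \eqref{HT-1} is odd, so for even $\phi$ the function $\mathcal{H}\phi$ is odd; being also $2\pi$-periodic, it must satisfy $\mathcal{H}\phi(-\pi)=\mathcal{H}\phi(\pi)=0$. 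Anchoring at the endpoint $-\pi$ and using $|\alpha+\pi|\le 2\pi$ for $\alpha\in[-\pi,\pi]$ then yields
\[
|\mathcal{H}\phi(\alpha)|=|\mathcal{H}\phi(\alpha)-\mathcal{H}\phi(-\pi)|\le\|\mathcal{H}\phi\|_{\Lambda_\gamma}\,|\alpha+\pi|^{\gamma}\le\|\mathcal{H}\|_{\Lambda_\gamma\to\Lambda_\gamma}(2\pi)^{\gamma}\|\phi\|_{\Lambda_\gamma}.
\]
Adding this to the $\mathcal{J}$-estimate gives \eqref{Est-K-alpha} with the stated constant $c_\gamma$.

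I expect the only genuine obstacle to be this passage from a seminorm estimate for $\mathcal{H}\phi$ to an honest pointwise bound: one must exhibit a zero of $\mathcal{H}\phi$, and it is exactly the evenness of $\phi$ that produces one, via the oddness of the conjugate function. Everything else is bookkeeping resting on Lemma~\ref{lm:K_2} and Privalov's theorem.
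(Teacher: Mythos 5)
Your proof is correct and follows essentially the same route as the paper: the splitting $\mathcal{K}_1\phi=\mathcal{H}\phi+\mathcal{J}\phi$ from \eqref{eq:K_21}, the bound $|\mathcal{J}\phi(\alpha)|\le\gamma^{-1}\pi^{\gamma-1}\|\phi\|_{\Lambda_\gamma}$ obtained from \eqref{eq:est-}--\eqref{eq:est+}, and Privalov's theorem combined with a zero of $\mathcal{H}\phi$ to upgrade the seminorm estimate to a pointwise one. The only cosmetic difference is the anchor point: the paper uses $\mathcal{H}\phi(0)=0$ (odd kernel integrated against even $\phi$) instead of your $\mathcal{H}\phi(\pm\pi)=0$; both are valid, and your choice in fact accounts for the factor $(2\pi)^{\gamma}$ exactly.
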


\begin{proof}
Applying the estimates \eqref{eq:est-} and \eqref{eq:est+} to  \eqref{eq:J1} we get
\begin{eqnarray}\label{eq:J3}
|\mathcal{J}\phi(\alpha)| &\le&\frac{1}{2\pi}\int_{0}^{\pi}|\phi(\alpha-t)-\phi(\alpha-\pi)\tan\frac{t}{2}|dt\\ \nonumber
&+&\frac{1}{2\pi}\int_{-\pi}^{0}|\phi( \alpha-t)-\phi(\alpha+\pi))\tan\frac{t}{2}|dt\\ \nonumber
&\le&\frac{1}{2\pi}2\|\phi\|_{\Lambda_\gamma}\left(\int_{0}^{\pi}(\pi-t)^{\gamma-1}dt+\int_{-\pi}^{0}(\pi+t)^{\gamma-1}dt\right)\\ \nonumber
&=&\frac{1}{\gamma \pi^{1-\gamma}}\|\phi\|_{\Lambda_\gamma}.
\end{eqnarray}
Finally, note that the Hilbert transform  is bounded in $\Lambda_\gamma$ by Privalov theorem \cite[Section 6.16]{King}   and $\mathcal{H}\phi(0)=0$ for even $\phi$.
Then
$$
|\mathcal{H}\phi(\alpha)|=|\mathcal{H}\phi(\alpha)-\mathcal{H}\phi(0)|\le \|\mathcal{H}\phi\|_{\Lambda_\gamma}|\alpha|^\gamma\le \|\mathcal{H}\|_{\Lambda_\gamma\to \Lambda_\gamma}\|\phi\|_{\Lambda_\gamma}(2\pi)^\gamma,
$$
and thus the result follows from the equality \eqref{eq:K_21}.
\end{proof}

Below $\Lambda_\gamma^e$ denotes the space of even $2\pi$-periodic H\"{o}lder functions on $\mathbb{R}$ with exponent  $0<\gamma<1$.

\begin{corollary}\label{cor:K1} Let $F$ be a Banach lattice of $2\pi$-periodic functions on $\mathbb{R}$,  such that constants belong to $F$. Then $\mathcal{K}_1$ is a bounded operator between $\Lambda_\gamma^e$ and $F$ and
$$\|\mathcal{K}_1\|_{\Lambda_\gamma^e\to F}\le c_\gamma\|1\|_F.
$$
\end{corollary}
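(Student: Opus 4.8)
The plan is to read off the conclusion directly from the pointwise estimate of Corollary~\ref{cor:K2alpha}, exploiting the order structure of the Banach lattice $F$. The decisive feature of that corollary is that it bounds $|\mathcal{K}_1\phi(\alpha)|$ by a quantity \emph{independent of} $\alpha$: the function $\mathcal{K}_1\phi$ is therefore dominated pointwise by a constant function, and constants belong to $F$ by hypothesis. In any Banach lattice, a domination $|g|\le|h|$ with $h\in F$ forces $g\in F$ together with $\|g\|_F\le\|h\|_F$, so the asserted norm bound will fall out immediately once membership is justified.

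First I would verify that, for $\phi\in\Lambda_\gamma^e$, the expression $\mathcal{K}_1\phi$ is a genuine $2\pi$-periodic measurable function of $\alpha$, so that the question of its membership in $F$ is even well posed. Existence of the principal-value integral at each $\alpha$ is Lemma~\ref{lm:K_2}. Periodicity follows from the convolution form $\mathcal{K}_1\phi(\alpha)=\tfrac1\pi\int_{-\pi}^{\pi}\phi(\alpha-t)/\sin t\,dt$ together with the $2\pi$-periodicity of $\phi$. For measurability it suffices to observe that $\mathcal{K}_1\phi$ is in fact continuous: using the decomposition $\mathcal{K}_1=\mathcal{H}+\mathcal{J}$ from \eqref{eq:K_21}, the term $\mathcal{H}\phi$ is H\"older continuous by the Privalov theorem \cite[Section 6.16]{King}, while the continuity of $\mathcal{J}\phi$ in $\alpha$ is read off from the regularized representation \eqref{eq:J1}, whose integrand is dominated uniformly in $\alpha$ by the integrable majorants furnished in \eqref{eq:est-} and \eqref{eq:est+}, permitting passage to the limit in $\alpha$.

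Next I would invoke Corollary~\ref{cor:K2alpha}, which gives $|\mathcal{K}_1\phi(\alpha)|\le c_\gamma\|\phi\|_{\Lambda_\gamma}$ for every $\alpha\in[-\pi,\pi]$ and every even $\phi\in\Lambda_\gamma$. Since $\mathcal{K}_1\phi$ is $2\pi$-periodic and the right-hand side is a constant, this estimate propagates to all $\alpha\in\mathbb{R}$, yielding the pointwise lattice inequality $|\mathcal{K}_1\phi|\le c_\gamma\|\phi\|_{\Lambda_\gamma}\cdot 1$, where $1$ denotes the constant function. As $1\in F$ by assumption, the lattice property gives $\mathcal{K}_1\phi\in F$ with $\|\mathcal{K}_1\phi\|_F\le c_\gamma\|\phi\|_{\Lambda_\gamma}\|1\|_F$ for every $\phi\in\Lambda_\gamma^e$, which is precisely the claimed estimate $\|\mathcal{K}_1\|_{\Lambda_\gamma^e\to F}\le c_\gamma\|1\|_F$.

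I expect no deep obstacle here: the analytic substance was already settled in Corollary~\ref{cor:K2alpha}, and what remains is a packaging argument built on the lattice axioms. The sole point deserving care is the measurability (equivalently, continuity) of $\mathcal{K}_1\phi$, without which membership in the lattice $F$ would be ill-posed; I anticipate this to be the only genuinely technical step, and it is disposed of by the $\mathcal{H}+\mathcal{J}$ splitting of Lemma~\ref{lm:K_2} as indicated above.
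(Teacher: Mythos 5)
Your argument is correct and coincides with the paper's own proof, which simply reads ``Follows from the estimate \eqref{Est-K-alpha}'': the pointwise bound $|\mathcal{K}_1\phi(\alpha)|\le c_\gamma\|\phi\|_{\Lambda_\gamma}$ from Corollary \ref{cor:K2alpha} plus the lattice domination axiom and $1\in F$ give the stated norm estimate. Your additional verification that $\mathcal{K}_1\phi$ is a well-defined, periodic, measurable function (via the $\mathcal{H}+\mathcal{J}$ splitting) is a reasonable piece of diligence that the paper leaves implicit, but it does not change the route.
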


\begin{proof} Follows from the estimate (\ref{Est-K-alpha}).
\end{proof}

\begin{corollary}\label{cor:J} Let $F$ be a Banach lattice of $2\pi$-periodic functions on $\mathbb{R}$,  such that constants belong to $F$. Then $\mathcal{J}$ is a bounded operator between $\Lambda_\gamma$ and $F$ and $\|\mathcal{J}\|_{\Lambda_\gamma\to F}\le 1/\gamma\pi^{1-\gamma}\|1\|_F$.
\end{corollary}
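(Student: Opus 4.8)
The plan is to observe that the entire quantitative content of this corollary has already been established, as a pointwise bound, inside the proof of Corollary \ref{cor:K2alpha}; what remains is only to transfer that bound into the $F$-norm using the lattice structure. First I would note that the pointwise estimate \eqref{eq:J3}, namely $|\mathcal{J}\phi(\alpha)|\le\frac{1}{\gamma\pi^{1-\gamma}}\|\phi\|_{\Lambda_\gamma}$, was derived there using only the decomposition \eqref{eq:J1} together with the H\"{o}lder bounds \eqref{eq:est-} and \eqref{eq:est+}. Since \eqref{eq:J1} rests merely on the periodicity $\phi(\alpha-\pi)=\phi(\alpha+\pi)$ and on the vanishing of $\int_{-\pi}^{\pi}\tan(t/2)\,dt$, the estimate is valid for every $\phi\in\Lambda_\gamma$, not just for even functions. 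Thus $|\mathcal{J}\phi(\alpha)|\le\frac{1}{\gamma\pi^{1-\gamma}}\|\phi\|_{\Lambda_\gamma}$ holds for all $\alpha$.

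Next I would rephrase this as the pointwise majorization $|\mathcal{J}\phi|\le\frac{1}{\gamma\pi^{1-\gamma}}\|\phi\|_{\Lambda_\gamma}\cdot 1$, where $1$ denotes the constant function, which belongs to $F$ by hypothesis. Applying the defining property of a Banach lattice---that $|g|\le|h|$ a.e.\ forces $\|g\|_F\le\|h\|_F$---with $g=\mathcal{J}\phi$ and $h=\frac{1}{\gamma\pi^{1-\gamma}}\|\phi\|_{\Lambda_\gamma}\cdot 1$ immediately yields $\|\mathcal{J}\phi\|_F\le\frac{1}{\gamma\pi^{1-\gamma}}\|\phi\|_{\Lambda_\gamma}\|1\|_F$, which is exactly the asserted operator norm bound.

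There is essentially no obstacle here: the argument is the same lattice-domination device already used to deduce Corollary \ref{cor:K1} from \eqref{Est-K-alpha}, now applied to the $\mathcal{J}$-part of that estimate rather than to the full operator $\mathcal{K}_1$. The only point requiring a moment's care is verifying that the derivation of \eqref{eq:J3} never invoked evenness of $\phi$; an inspection of the chain \eqref{eq:J1}--\eqref{eq:est+} confirms this at once, so the estimate extends from $\Lambda_\gamma^e$ to all of $\Lambda_\gamma$ with no change in the constant.
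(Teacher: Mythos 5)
Your proposal is correct and follows exactly the paper's own route: the paper's proof is the one-liner ``Follows from the estimate \eqref{eq:J3},'' and you have simply spelled out the lattice-domination step and the (correct) observation that the derivation of \eqref{eq:J3} never uses evenness of $\phi$. No gaps.
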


\begin{proof} Follows from the  estimate \eqref{eq:J3}.
\end{proof}

Now we are in a position to prove the following result.

\begin{theorem}\label{L_Lambda} For every $\alpha\in[-\pi,\pi]$ the map
$\varphi\mapsto \mathcal{K}_2\varphi(\alpha)$ is a  bounded linear functional on $L^1(\mathbb{R}_+)\hat\otimes_\pi \Lambda_\gamma^e$  and its norm does not exceed $c_\gamma$.
\end{theorem}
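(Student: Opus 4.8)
The plan is to follow the template of the proof of Theorem \ref{K2_F}, substituting the $L^2$--estimate of Lemma \ref{lem:K_1_in_L2} by the H\"older estimate of Corollary \ref{cor:K2alpha}. First I would record the action of $\mathcal{K}_2$ on elementary tensors. For $\varphi_1\in L^1(\mathbb{R}_+)$ and $\psi_1\in\Lambda_\gamma^e$ the principal-value integral defining $\mathcal{K}_1\psi_1(\alpha)$ exists by Lemma \ref{lm:K_2}, and the factorization \eqref{eq:polarK2} gives
$$
\mathcal{K}_2(\varphi_1\otimes\psi_1)(\alpha)=\left(\int_0^\infty\varphi_1(\rho)\,d\rho\right)\mathcal{K}_1\psi_1(\alpha).
$$
Since $\psi_1$ is even, Corollary \ref{cor:K2alpha} applies and yields the bound $|\mathcal{K}_1\psi_1(\alpha)|\le c_\gamma\|\psi_1\|_{\Lambda_\gamma}$, uniform in $\alpha\in[-\pi,\pi]$. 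As the H\"older semi-norm $\|\cdot\|_{\Lambda_\gamma}$ is dominated by the Banach-space norm of $\Lambda_\gamma^e$, this produces
$$
|\mathcal{K}_2(\varphi_1\otimes\psi_1)(\alpha)|\le c_\gamma\|\varphi_1\|_{L^1}\|\psi_1\|_{\Lambda_\gamma^e}
$$
for every $\alpha\in[-\pi,\pi]$.

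Next I would pass to the algebraic tensor product. For $\varphi=\sum_{i=1}^n\varphi_i\otimes\psi_i$ with $\varphi_i\in L^1(\mathbb{R}_+)$ and $\psi_i\in\Lambda_\gamma^e$, linearity of $\mathcal{K}_2$ on elementary tensors together with the triangle inequality gives $|\mathcal{K}_2\varphi(\alpha)|\le c_\gamma\sum_{i=1}^n\|\varphi_i\|_{L^1}\|\psi_i\|_{\Lambda_\gamma^e}$. Taking the infimum over all finite representations of $\varphi$ and recalling the definition of the projective norm, I obtain $|\mathcal{K}_2\varphi(\alpha)|\le c_\gamma\|\varphi\|_\pi$. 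Hence, for each fixed $\alpha$, the map $\varphi\mapsto\mathcal{K}_2\varphi(\alpha)$ is a bounded linear functional on $L^1(\mathbb{R}_+)\otimes\Lambda_\gamma^e$ equipped with the projective norm, with functional norm at most $c_\gamma$.

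Finally, since $L^1(\mathbb{R}_+)\otimes\Lambda_\gamma^e$ is by construction dense in its projective completion $L^1(\mathbb{R}_+)\hat\otimes_\pi\Lambda_\gamma^e$, the functional extends uniquely by continuity to the whole projective tensor product without increasing its norm, which establishes the claim. Note that here, in contrast to the proof of Theorem \ref{K2_F}, no intermediate density step between two tensor products is needed, because $\Lambda_\gamma^e$ is already the second factor.

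I expect the only point requiring care to be the reduction to the even-H\"older setting: Corollary \ref{cor:K2alpha} is available solely for even $\phi$ (its proof uses $\mathcal{H}\phi(0)=0$), which is precisely why the second tensor factor must be $\Lambda_\gamma^e$ rather than $\Lambda_\gamma$, and one should check that the semi-norm estimate there upgrades to a bound by the genuine Banach-space norm so that passing to the projective-norm infimum is legitimate. Everything else reduces to the density-and-infimum mechanism already employed for Theorem \ref{K2_F}.
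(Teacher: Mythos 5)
Your proposal is correct and follows essentially the same route as the paper: factor $\mathcal{K}_2$ on elementary tensors, apply Corollary \ref{cor:K2alpha} to get the pointwise bound $c_\gamma\|\varphi_1\|_{L^1}\|\psi_1\|_{\Lambda_\gamma}$, extend by linearity and the triangle inequality to the algebraic tensor product, take the infimum over representations, and conclude by density in the projective completion. Your added remark about upgrading the $\Lambda_\gamma$ semi-norm estimate to one in the Banach-space norm of $\Lambda_\gamma^e$ is a legitimate fine point that the paper leaves implicit.
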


\begin{proof} Let $\varphi_1\in L^1(\mathbb{R}_+)$ and $\varphi_2\in \Lambda_\gamma^e$. Then for all $\alpha\in[-\pi,\pi]$
$$
\mathcal{K}_2(\varphi_1\otimes \varphi_2)(\alpha)=\left(\int_0^\infty \varphi_1(\rho)d\rho\right) \mathcal{K}_1\varphi_2(\alpha).
$$
Therefore Corollary \ref{cor:K2alpha} yields that
\begin{eqnarray*}%\label{est:K_25}
|\mathcal{K}_2(\varphi_1\otimes \varphi_2)(\alpha)|&\le &\int_0^\infty |\varphi_1(\rho)|d\rho|\mathcal{K}_1\varphi_2(\alpha)|\\ \nonumber
&\le&c_\gamma\|\varphi_1\|_{L^1}\|\varphi_2\|_{ \Lambda_\gamma}.
\end{eqnarray*}
It follows that for an arbitrary function $\varphi=\sum_{i=1}^n \varphi_i\otimes \psi_i$ from  $L^1(\mathbb{R}_+)\otimes\Lambda_\gamma^e$ and for all $\alpha\in[-\pi,\pi]$ one has
\begin{eqnarray*}%\label{est:K_28}
|\mathcal{K}_2(\varphi)(\alpha)|\le c_\gamma\sum_{i=1}^n\|\varphi_i\|_{L^1}\|\psi_i\|_{ \Lambda_\gamma}.
\end{eqnarray*}
So, for every $\varphi\in L^1(\mathbb{R}_+)\hat\otimes_\pi  \Lambda_\gamma$ and for all $\alpha\in[-\pi,\pi]$
\begin{eqnarray}%\label{est:K_28}
|\mathcal{K}_2(\varphi)(\alpha)|\le c_\gamma\|\varphi|_{L^1\hat\otimes_\pi  \Lambda_\gamma}
\end{eqnarray}
and the result follows.
\end{proof}

\begin{corollary}\label{K_Lambda} For every $\alpha\in[-\pi,\pi]$ and for every $r>0$  the map
$\varphi\mapsto \mathcal{K}\varphi(r,\alpha)$ is a  bounded linear functional on $L^1(\mathbb{R}_+)\hat\otimes_\pi \Lambda_\gamma^e$  and its norm does not exceed $\frac1r c_\gamma$.
\end{corollary}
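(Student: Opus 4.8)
The plan is to reduce everything to Theorem \ref{L_Lambda} via the elementary scaling relation between $\mathcal{K}$ and $\mathcal{K}_2$. Comparing the defining formula \eqref{eq:polarK} for $\mathcal{K}$ with the formula \eqref{eq:polarK_2} for $\mathcal{K}_2$, the only difference between the two expressions is the prefactor $\frac1r$; the double integral itself is identical and, crucially, carries no dependence on the radial variable $r$. Hence one has the exact pointwise identity
$$
\mathcal{K}\varphi(r,\alpha)=\frac1r\,\mathcal{K}_2\varphi(\alpha),\qquad r>0,\ \alpha\in[-\pi,\pi],
$$
valid for every $\varphi$ for which the right-hand side makes sense. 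I would record this identity first, as it does all the work.

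Next I would fix $\alpha\in[-\pi,\pi]$ and $r>0$ and view $\varphi\mapsto\mathcal{K}\varphi(r,\alpha)$ as the composition of the functional $\varphi\mapsto\mathcal{K}_2\varphi(\alpha)$ with multiplication by the scalar $\frac1r$. By Theorem \ref{L_Lambda}, the functional $\varphi\mapsto\mathcal{K}_2\varphi(\alpha)$ is linear and bounded on $L^1(\mathbb{R}_+)\hat\otimes_\pi\Lambda_\gamma^e$ with norm at most $c_\gamma$. Multiplication by the fixed positive constant $\frac1r$ preserves linearity and boundedness and multiplies the operator norm by exactly $\frac1r$, so
$$
|\mathcal{K}\varphi(r,\alpha)|=\frac1r\,|\mathcal{K}_2\varphi(\alpha)|\le\frac1r\,c_\gamma\,\|\varphi\|_{L^1\hat\otimes_\pi\Lambda_\gamma},
$$
which is precisely the claimed bound on the functional norm.

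There is essentially no obstacle to overcome here: because the scaling identity is an exact pointwise relation following directly from the defining integrals, no passage from elementary tensors to the completed projective tensor product is required, and the density arguments used in the proof of Theorem \ref{L_Lambda} are already absorbed into that theorem. The only point worth stating explicitly is that the function space in the hypothesis of Theorem \ref{L_Lambda}, namely $L^1(\mathbb{R}_+)\hat\otimes_\pi\Lambda_\gamma^e$, coincides with the one in the corollary, so the conclusion transfers verbatim up to the factor $\frac1r$.
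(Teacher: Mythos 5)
Your proof is correct and is exactly the argument the paper intends (the corollary is stated without proof, as an immediate consequence of Theorem \ref{L_Lambda} via the pointwise identity $\mathcal{K}\varphi(r,\alpha)=\frac1r\,\mathcal{K}_2\varphi(\alpha)$ obtained by comparing \eqref{eq:polarK} with \eqref{eq:polarK_2}). Nothing is missing.
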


\begin{corollary}\label{L_Lambda} Let $F$ be a Banach lattice of $2\pi$-periodic measurable functions on $\mathbb{R}$ such that constants belong to $F$. Then
$\mathcal{K}_2$ is bounded as an operator between $L^1(\mathbb{R}_+)\hat\otimes_\pi \Lambda_\gamma^e$ and $F$ and its norm does not exceed $c_\gamma$.
\end{corollary}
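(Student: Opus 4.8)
The plan is to obtain this statement as a direct consequence of the uniform pointwise estimate furnished by the preceding Theorem, namely that for every $\alpha\in[-\pi,\pi]$ and every $\varphi$ in the projective tensor product one has $|\mathcal{K}_2\varphi(\alpha)|\le c_\gamma\|\varphi\|_{L^1(\mathbb{R}_+)\hat\otimes_\pi\Lambda_\gamma^e}$. The essential feature is that the right-hand side does not depend on $\alpha$, so that the entire function $\alpha\mapsto\mathcal{K}_2\varphi(\alpha)$ is dominated in absolute value by the constant function with value $c_\gamma\|\varphi\|_{L^1(\mathbb{R}_+)\hat\otimes_\pi\Lambda_\gamma^e}$, which lies in $F$ by hypothesis. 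Boundedness into $F$ will then follow from the solidity (ideal property) of the Banach lattice $F$, exactly as in the proof of Theorem \ref{K2_F}, with the H\"{o}lder estimate of Corollary \ref{cor:K2alpha} now playing the role of the $L^2$ estimate of Lemma \ref{lem:K_1_in_L2}.

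First I would check that $\alpha\mapsto\mathcal{K}_2\varphi(\alpha)$ is measurable, since membership in $F$ presupposes this. For $\varphi=\sum_{i=1}^n\varphi_i\otimes\psi_i$ in the algebraic tensor product $L^1(\mathbb{R}_+)\otimes\Lambda_\gamma^e$ it is immediate, because $\mathcal{K}_2\varphi(\alpha)=\sum_{i=1}^n\bigl(\int_0^\infty\varphi_i(\rho)\,d\rho\bigr)\,\mathcal{K}_1\psi_i(\alpha)$, and each $\mathcal{K}_1\psi_i$ is continuous: writing $\mathcal{K}_1=\mathcal{H}+\mathcal{J}$ as in Lemma \ref{lm:K_2}, the term $\mathcal{H}\psi_i$ is H\"{o}lder continuous by Privalov's theorem, while $\mathcal{J}\psi_i$ is continuous by dominated convergence using the convergent majorant exhibited in the proof of Lemma \ref{lm:K_2}. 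For an arbitrary $\varphi$ in the completion I would pick $\varphi^{(m)}$ in the algebraic tensor product with $\varphi^{(m)}\to\varphi$ in the projective norm and apply the pointwise estimate to $\varphi-\varphi^{(m)}$; this gives $\sup_{\alpha}|\mathcal{K}_2\varphi^{(m)}(\alpha)-\mathcal{K}_2\varphi(\alpha)|\le c_\gamma\|\varphi-\varphi^{(m)}\|_{L^1(\mathbb{R}_+)\hat\otimes_\pi\Lambda_\gamma^e}\to0$, so $\mathcal{K}_2\varphi$ is a uniform limit of measurable functions and hence measurable.

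With measurability in hand, the conclusion is quick: the constant $h:=c_\gamma\|\varphi\|_{L^1(\mathbb{R}_+)\hat\otimes_\pi\Lambda_\gamma^e}\cdot 1$ belongs to $F$, and $|\mathcal{K}_2\varphi(\alpha)|\le h(\alpha)$ for all $\alpha$, so solidity of $F$ yields $\mathcal{K}_2\varphi\in F$ with $\|\mathcal{K}_2\varphi\|_F\le\|h\|_F=c_\gamma\|1\|_F\,\|\varphi\|_{L^1(\mathbb{R}_+)\hat\otimes_\pi\Lambda_\gamma^e}$ by monotonicity of the lattice norm; taking the supremum over $\varphi$ of unit norm gives $\|\mathcal{K}_2\|\le c_\gamma\|1\|_F$, the asserted bound in the $\|1\|_F$-normalization of Theorem \ref{K2_F} and Corollary \ref{cor:K1}. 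The only genuinely non-formal point, and hence the main obstacle I anticipate, is the measurability of $\mathcal{K}_2\varphi$ for general elements of the projective completion; this is precisely where the $\alpha$-independence of the constant in the preceding Theorem does the work, since the uniformity in $\alpha$ makes the approximation by algebraic tensors converge uniformly and thereby delivers measurability of the limit for free. Everything else reduces to the defining order properties of a Banach lattice together with the hypothesis $1\in F$.
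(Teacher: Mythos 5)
Your argument is exactly the one the paper intends: the corollary is stated without proof as an immediate consequence of the uniform-in-$\alpha$ pointwise bound of the preceding theorem together with the solidity of the Banach lattice $F$ and the hypothesis $1\in F$, precisely parallel to the last step of the proof of Theorem \ref{K2_F}. Your bound $c_\gamma\|1\|_F$ is the correct one (the factor $\|1\|_F$ appears to have been dropped in the paper's statement of the corollary; compare Theorem \ref{K2_F} and Corollary \ref{cor:K1}), and your additional verification of measurability of $\mathcal{K}_2\varphi$ via uniform approximation by algebraic tensors is a point the paper leaves implicit.
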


\section{Boundedness of the operator ${\bf K}$ in tensor products}\label{Sec5}\setcounter{equation}{0}

In the following for a measurable subset $B$ of  $\mathbb{R}$ we consider $L^p(B)$ as a subspace of $L^p(\mathbb{R})$.

\begin{theorem}\label{Lp} Let $E$ be a Banach lattice of measurable functions on $\mathbb{R}^2$, $p\in (1,\infty)$, $1/p+1/q=1$ and $B$ a measurable subset of  $\mathbb{R}$.

(i) If the function $v_p(x_1,x_2)=|x_1|^{-1/q}|x_2|^{-1/p}$ belongs to $E$ then ${\bf K}$ is bounded as an operator between $L^q(\mathbb{R})\hat\otimes_\pi L^p(B)$ and $E$ and
$$
\|{\bf K}\|_{L^q(\mathbb{R})\hat\otimes_\pi L^p(B)\to E}\le  C_p\|v_p\|_E,
$$
where
$$
C_p=
\begin{cases}
\tan\frac{\pi}{2p}, 1<p\le 2,\\
\cot\frac{\pi}{2p}, 2<p<\infty
\end{cases}
$$
stands for the Riesz constant.

(ii) If $p<q$, the subset $B$ is bounded,  and ${\bf K}$ maps $L^q(\mathbb{R})\hat\otimes_\pi L^p(B)$ into  $E$, then $v_p$ belongs to $E$.
\end{theorem}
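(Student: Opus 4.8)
\emph{Part (i), reduction.} The plan is to reduce everything to rank-one tensors $f=g\otimes h$ with $g\in L^q(\mathbb R)$ and $h\in L^p(B)$, and to recognise the inner integral of $\mathbf K$ as a Hilbert transform on the line. Carrying out the $y_1$-integration in \eqref{eq:UniqueIntOpn=2} first, the singularity sits at $y_1=x_1y_2/x_2$, and one obtains
\[
\mathbf K(g\otimes h)(x_1,x_2)=\frac{1}{x_2}\int_B h(y_2)\,\mathcal H g\!\left(\tfrac{x_1y_2}{x_2}\right)dy_2,
\]
where $\mathcal H g(a)=\tfrac1\pi\,\mathrm{p.v.}\!\int_{\mathbb R}\frac{g(y)}{a-y}\,dy$ is exactly the inner transform $H$ appearing in Section \ref{Sec3}.

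\emph{Part (i), estimate.} Next I would apply H\"older's inequality in $y_2$ with the conjugate pair $(p,q)$,
\[
|\mathbf K(g\otimes h)(x_1,x_2)|\le\frac{\|h\|_{L^p(B)}}{|x_2|}\left(\int_B\Big|\mathcal H g\big(\tfrac{x_1y_2}{x_2}\big)\Big|^{q}dy_2\right)^{1/q}.
\]
Enlarging $B$ to $\mathbb R$ and substituting $u=x_1y_2/x_2$ turns the last integral into $\frac{|x_2|}{|x_1|}\|\mathcal H g\|_{L^q}^{q}$, and the Riesz theorem gives $\|\mathcal H g\|_{L^q}\le C_q\|g\|_{L^q}$. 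Since $p,q$ are conjugate, a one-line identity ($\tfrac{\pi}{2q}=\tfrac\pi2-\tfrac{\pi}{2p}$) shows $C_q=C_p$, so collecting the powers of $|x_1|,|x_2|$, and using $\frac{1}{|x_2|}\big(\frac{|x_2|}{|x_1|}\big)^{1/q}=|x_1|^{-1/q}|x_2|^{-1/p}$, yields the pointwise bound $|\mathbf K(g\otimes h)|\le C_p\|g\|_{L^q}\|h\|_{L^p(B)}\,v_p$. For a general element of the algebraic tensor product the triangle inequality together with the infimum over representations upgrades this to $|\mathbf K f|\le C_p\,\|f\|_{L^q\hat\otimes_\pi L^p(B)}\,v_p$ pointwise; the Banach-lattice property of $E$ then gives the asserted norm bound, and density of the algebraic tensor product in the projective one finishes (i).

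\emph{Part (ii), strategy.} For the converse I would first invoke the closed graph theorem: as both $L^q(\mathbb R)\hat\otimes_\pi L^p(B)$ and $E$ are Banach spaces and $\mathbf K$ maps one into the other everywhere, it is automatically bounded, $\|\mathbf K f\|_E\le C\|f\|$. The idea is to test this on explicit near-extremal tensors. Fix $g$ with nonnegative Hilbert transform, e.g. $g(y)=-y/(1+y^2)\in L^q(\mathbb R)$ with $\mathcal H g(u)=1/(1+u^2)$, and take $h_s(y_2)=y_2^{-s}\chi_{(0,1)}(y_2)$ placed inside $B$, which is possible because $B$ is bounded. The membership $h_s\in L^p(B)$ requires $sp<1$, and it is precisely the hypothesis $p<q$, i.e. $p<2$, that lets $s$ run up to $1/p$. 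A direct computation then gives, on the first quadrant,
\[
\mathbf K(g\otimes h_s)(x_1,x_2)=x_1^{\,s-1}x_2^{-s}\int_0^{x_1/x_2}\frac{u^{-s}}{1+u^2}\,du,
\]
whose prefactor tends to $v_p$ as $s\uparrow 1/p$.

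\emph{Part (ii), the crux.} It remains to let $s\uparrow 1/p$ and extract $v_p\in E$ from the family $\{\mathbf K(g\otimes h_s)\}\subset E$. This is the main obstacle: at the borderline $s=1/p$ one has $\|h_s\|_{L^p}\to\infty$, so the limit must balance the blow-up of the test-function norm against the convergence of the weight. Here I would exploit the homogeneity of $\mathbf K$ under $\mathrm{GL}^+(2)$: restricting that symmetry to the diagonal torus preserves the tensor structure and, since $B$ is bounded, the support constraint, and it makes the relevant constants uniform across all angular cones $\{c\le x_1/x_2\le 2c\}$, because the dilation factors $a^{1/q}b^{1/p}$ cancel between the lower bound for $\mathbf K(g\otimes h_s)$ on a cone and the norm of the dilated test function. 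Assembling these uniform cone estimates through the order (Fatou) structure of the lattice $E$ should then produce $v_p\in E$; verifying that this assembly indeed yields a \emph{finite} $\|v_p\|_E$ — rather than a divergent sum over cones — is the delicate point on which the whole argument turns.
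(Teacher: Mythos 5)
Part (i) of your proposal is correct and is essentially the paper's own argument: the only difference is cosmetic, namely that you integrate in $y_1$ first and apply the Riesz theorem to the $L^q(\mathbb{R})$ factor (using $C_q=C_p$), whereas the paper integrates in $y_2$ first and applies Riesz to the $L^p(B)$ factor. Both routes give the same pointwise bound $|\mathbf{K}(g\otimes h)|\le C_p\,\|g\|_q\|h\|_p\,v_p$, and the remaining steps (triangle inequality, infimum over representations of the projective norm, the lattice property of $E$, density of the algebraic tensor product) coincide.

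Part (ii) contains a genuine gap, which you have in fact located yourself: your family $h_s(y_2)=y_2^{-s}\chi_{(0,1)}$ only produces the weight $x_1^{s-1}x_2^{-s}$ for subcritical $s<1/p$, and the passage to the borderline $s=1/p$ is never carried out. The cone-plus-Fatou scheme you sketch cannot be completed as stated: a general Banach lattice of measurable functions has no Fatou or monotone-completeness property to invoke, and you would be assembling lower bounds over infinitely many cones while $\|h_s\|_{L^p(B)}\to\infty$, with nothing forcing the resulting norm to be finite. The paper avoids any limit: it uses the single test function $f_2(y_2)=\mathrm{sgn}(y_2)|y_2|^{-1/q}$, which lies in $L^p(B)$ precisely because $p<q$ (so $p/q<1$) and $B$ is bounded, and whose Hilbert transform is the exact power $-\tan\frac{\pi}{2p}|y|^{-1/q}$; pairing with $\chi_{(0,1)}$ in the other slot gives $\mathbf{K}(\chi_{(0,1)}\otimes f_2)(x_1,x_2)=p\tan\frac{\pi}{2p}\,\mathrm{sgn}(x_1)\,|x_1|^{-1/p}|x_2|^{-1/q}$ in one step, and the lattice property of $E$ concludes. (The closed graph theorem is not needed: one only has to know that the image of this one tensor lies in $E$.) Be aware, though, that this computation actually yields $|x_1|^{-1/p}|x_2|^{-1/q}=v_q(x_1,x_2)$ rather than $v_p$ as literally stated in the theorem; your implicit observation that reaching $v_p$ itself with power-type test functions would require the inadmissible exponent $s=1/p$ is therefore accurate, and the discrepancy lies in the statement rather than in the method — but the intended converse is the one-shot subcritical computation, not a limiting argument.
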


\begin{proof} (i) Let $f\in L^q(\mathbb{R})\hat\otimes_\pi L^p(B)$ be of the form $f=f_1\otimes f_2$ where $f_1\in  L^q(\mathbb{R})$,  $f_2\in  L^p(B)$. Then for $x_1\ne 0$ one has
\begin{eqnarray}\label{est1}
({\bf K}f)(x_1,x_2)&=&\frac{1}{\pi}\int_{\mathbb{R}}f_1(y_1)dy_1\int_{\mathbb{R}}\frac{f_2(y_2)}{x_1y_2-x_2y_1}dy_2\\ \nonumber
&=&-\frac{1}{x_1}\int_{\mathbb{R}}f_1(y_1)dy_1\frac{1}{\pi}\int_{\mathbb{R}}\frac{f_2(y_2)}{\frac{x_2}{x_1}y_1-y_2}dy_2\\ \nonumber
&=&-\frac{1}{x_1}\int_{\mathbb{R}}f_1(y_1)(Hf_2)\left(\frac{x_2}{x_1}y_1\right)dy_1
\end{eqnarray}
where $Hf_2$ denotes the Hilbert transform of $f_2$. Therefore the H$\ddot{\rm o}$lder inequality yields for $x_2\ne 0$
\begin{eqnarray*}
|({\bf K}f)(x_1,x_2)|&\le& \frac{1}{|x_1|}\|f_1\|_q\|Hf_2\left(\frac{x_2}{x_1}\cdot\right)\|_p\\ \nonumber
&=&\frac{1}{|x_1|}\frac{1}{\left|\frac{x_2}{x_1}\right|^{1/p}}\|f_1\|_q\|Hf_2\|_p=\frac{\pi}{|x_1|^{1/q}|x_2|^{1/p}}\|f_1\|_q\|Hf_2\|_p\nonumber
\end{eqnarray*}
(here and below $\|\cdot\|_p$ denotes the $L^p$ norm).
Applying the Riesz inequality we  get for all $x_1, x_2\ne 0$
\begin{equation}\label{est}
|({\bf K}f)(x_1,x_2)|\le \frac{C_p}{|x_1|^{1/q}|x_2|^{1/p}}\|f_1\|_q\|f_2\|_p.
\end{equation}
Every function $f\in L^q(\mathbb{R})\otimes L^p(B)$ is of the form
$$
f(y_1,y_2)=\sum_{i=1}^n f_i \otimes g_i(y_1,y_2)=\sum_{i=1}^n f_i(y_1)  g_i(y_2)
$$
with $f_i\in L^q(\mathbb{R})$, $g_i\in L^p(B)$.

Application of \eqref{est} implies
\begin{equation}\label{est2}
|({\bf K}f)(x_1,x_2)|\le \sum_{i=1}^n |{\bf K}(f_i \otimes g_i)(x_1,x_2)| \le \frac{C_p}{|x_1|^{1/q}|x_2|^{1/p}}\sum_{i=1}^n\|f_i\|_q\|g_i\|_p.
\end{equation}
So the definition of the projective norm yields that
\begin{equation}\label{est3}
|({\bf K}f)(x_1,x_2)|\le \frac{C_p}{|x_1|^{1/q}|x_2|^{1/p}}\|f\|_{ L^q(\mathbb{R})\hat\otimes_\pi L^p(B)}.
\end{equation}
Finally, since $E$ is  a Banach lattice of measurable functions, the inequality \ref{est3} implies that for all $f\in L^q(\mathbb{R})\hat\otimes_\pi L^p(B)$ we have
$$
\|{\bf K}f\|_E\le C_p\|v_p\|_E\|f\|_{ L^q(\mathbb{R})\hat\otimes_\pi L^p(B)}.
$$

(ii) Now let $p<q$, ${\bf K}$ maps $L^q(\mathbb{R})\hat\otimes_\pi L^p(B)$ into  $E$, and  $B$ is a bounded subset of $\mathbb{R}$. Then the function
$$
f_2(x_2):=\mathrm{sgn}(x_2)|x_2|^{-1/q}
$$
belongs to $L^p(B)$. Further,  it is known that
$$
(Hf_2)(y)=-\tan\frac{\pi}{2p}|y|^{-1/q}
$$
(see, e. g., \cite[p. 176 (30)]{BE}). If we put $f=\chi_{(0,1)}\otimes f_2$, then $f\in L^q(\mathbb{R})\hat\otimes_\pi L^p(B)$ and formula \eqref{est1} yield that
$$
({\bf K}f)(x_1,x_2)=\tan\frac{\pi}{2p}\frac{1}{x_1}\int_0^1\left|\frac{x_2}{x_1}y_1\right|^{-1/q}dy_1=p\tan\frac{\pi}{2p}x_1^{-1}|x_1|^{1/q}|x_2|^{-1/q}.
$$
 Since this function belongs to a lattice $E$, the function $v_p$ belongs to $E$, too.
\end{proof}

\begin{corollary} Let a non-negative measure $\mu$ on $\mathbb{R}^2$ be such that $v_p\in L^r(\mathbb{R}^2,\mu)$, $r\ge 1$. Then ${\bf K}$ is bounded as an operator between $L^q(\mathbb{R})\hat\otimes_\pi L^p(\mathbb{R})$ and the space $E= L^r(\mathbb{R}^2,\mu)$ and
$$
\|{\bf K}\|_{L^q(\mathbb{R})\hat\otimes_\pi L^p(\mathbb{R})\to L^r(\mathbb{R}^2,\mu)}\le  C_p\|v_p\|_{L^r(\mathbb{R}^2,\mu)}.
$$
\end{corollary}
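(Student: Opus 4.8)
The plan is to obtain this corollary as a direct specialization of Theorem~\ref{Lp}(i), made with the choices $E=L^r(\mathbb{R}^2,\mu)$ and $B=\mathbb{R}$. The conclusion and the norm bound $C_p\|v_p\|_E$ asserted in Theorem~\ref{Lp}(i) are already exactly what is claimed here, so the entire task reduces to verifying the two structural hypotheses of that theorem: that $L^r(\mathbb{R}^2,\mu)$ is a Banach lattice of measurable functions on $\mathbb{R}^2$, and that $v_p$ belongs to it. The second is precisely the standing assumption of the corollary, so the only genuine work is checking the first.

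To that end I would recall the standard fact that, for a non-negative measure $\mu$ and any exponent $r\ge 1$, the space $L^r(\mathbb{R}^2,\mu)$ of $\mu$-equivalence classes of measurable functions is a Banach space — the restriction $r\ge 1$ being exactly what secures the Minkowski inequality, hence a genuine norm. Endowing it with the $\mu$-almost-everywhere pointwise order, one checks the lattice-norm (monotonicity) property directly: if $g,h$ are measurable with $|g|\le|h|$ $\mu$-a.e.\ and $h\in L^r(\mathbb{R}^2,\mu)$, then $\int_{\mathbb{R}^2}|g|^r\,d\mu\le\int_{\mathbb{R}^2}|h|^r\,d\mu$, whence $g\in L^r(\mathbb{R}^2,\mu)$ and $\|g\|_r\le\|h\|_r$. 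This is exactly the notion of Banach lattice of measurable functions used in Theorem~\ref{Lp}. I would also note, as a preliminary, that the hypothesis $v_p\in L^r(\mathbb{R}^2,\mu)$ forces $v_p<\infty$ $\mu$-a.e.; since $v_p=|x_1|^{-1/q}|x_2|^{-1/p}$ is infinite precisely on the coordinate axes, this means $\mu$ assigns zero measure to $\{x_1=0\}\cup\{x_2=0\}$.

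With these facts in hand I would invoke Theorem~\ref{Lp}(i) verbatim. Its pointwise estimate \eqref{est3}, namely $|(\mathbf{K}f)(x_1,x_2)|\le C_p\,v_p(x_1,x_2)\,\|f\|_{L^q(\mathbb{R})\hat\otimes_\pi L^p(\mathbb{R})}$ valid for $x_1,x_2\ne 0$, thus holds $\mu$-almost everywhere by the preceding remark about the axes. Combined with the lattice domination just verified, this gives both $\mathbf{K}f\in L^r(\mathbb{R}^2,\mu)$ and $\|\mathbf{K}f\|_{L^r(\mathbb{R}^2,\mu)}\le C_p\|v_p\|_{L^r(\mathbb{R}^2,\mu)}\,\|f\|_{L^q(\mathbb{R})\hat\otimes_\pi L^p(\mathbb{R})}$, and taking the supremum over $f$ of unit projective norm yields the stated operator-norm bound. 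There is no real obstacle here, as the result is a routine specialization; the single point deserving care — that the bound from Theorem~\ref{Lp}(i), which is stated only off the axes, is an honest $\mu$-a.e.\ domination on all of $\mathbb{R}^2$ — is settled by the observation that $v_p\in L^r(\mathbb{R}^2,\mu)$ renders the axes $\mu$-null, so no density or limiting argument is required.
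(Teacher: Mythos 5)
Your proposal is correct and matches the paper's intent exactly: the corollary is stated there without proof precisely because it is the specialization of Theorem~\ref{Lp}(i) to $E=L^r(\mathbb{R}^2,\mu)$ and $B=\mathbb{R}$, with the only content being that $L^r(\mathbb{R}^2,\mu)$ is a Banach lattice of measurable functions and that $v_p\in E$ is the standing hypothesis. Your additional observation that $v_p\in L^r(\mathbb{R}^2,\mu)$ forces the coordinate axes to be $\mu$-null, so the off-axis pointwise bound is a genuine $\mu$-a.e.\ domination, is a careful touch the paper leaves implicit.
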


\begin{corollary} Let $v_2\in E$. Then ${\bf K}$ is bounded as an operator between $L^2(\mathbb{R}^2)$ and $E$ and
$$
\|{\bf K}\|_{L^2(\mathbb{R}^2)\to E}\le  \|v_2\|_{E}.
$$
\end{corollary}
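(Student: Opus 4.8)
The plan is to derive the corollary by specializing Theorem~\ref{Lp}(i) to the self-dual exponent $p=q=2$, with $B=\mathbb{R}$. For these values the weight of the theorem becomes
$$
v_p(x_1,x_2)=|x_1|^{-1/q}|x_2|^{-1/p}=|x_1|^{-1/2}|x_2|^{-1/2}=v_2(x_1,x_2),
$$
so the standing hypothesis $v_2\in E$ is exactly the hypothesis $v_p\in E$ of Theorem~\ref{Lp}(i), and nothing further is assumed on the lattice $E$. The entire content of the corollary therefore reduces to evaluating the Riesz constant at $p=2$ and recording the domain in the stated form.

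For the constant, $p=2$ lies in the regime $1<p\le 2$, where $C_p=\tan\frac{\pi}{2p}$, so it collapses to $C_2=\tan\frac{\pi}{4}=1$. I would also point out why the domain is naturally written as $L^2(\mathbb{R}^2)$: on an elementary tensor the relevant norms coincide, $\|f_1\otimes f_2\|_{L^2(\mathbb{R}^2)}=\|f_1\|_2\|f_2\|_2$, so the rank-one estimate \eqref{est}, with $C_2=1$, already takes the clean form
$$
|({\bf K}f)(x_1,x_2)|\le v_2(x_1,x_2)\,\|f\|_{L^2(\mathbb{R}^2)}.
$$

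It then remains only to run the $p=2$ case of the proof of Theorem~\ref{Lp}(i) unchanged. The defining property of the projective norm promotes the rank-one bound to the estimate \eqref{est3} for every $f$ in the domain $L^2(\mathbb{R})\hat\otimes_\pi L^2(\mathbb{R})$, written $L^2(\mathbb{R}^2)$ in the statement, now with the sharp factor $C_2=1$, giving $|({\bf K}f)(x_1,x_2)|\le v_2(x_1,x_2)\|f\|_{L^2(\mathbb{R}^2)}$ throughout. Inserting this majorant into $E$ and using that $E$ is a Banach lattice yields $\|{\bf K}f\|_E\le\|v_2\|_E\,\|f\|_{L^2(\mathbb{R}^2)}$, the asserted bound with constant $\|v_2\|_E$. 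I anticipate no real difficulty: the only points to watch are the collapse $C_2=1$ of the Riesz constant at the self-dual exponent and the matching of the weight to $v_2$, everything else being inherited verbatim from Theorem~\ref{Lp}(i).
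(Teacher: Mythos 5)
Your argument breaks down at the final identification: $L^2(\mathbb{R})\hat\otimes_\pi L^2(\mathbb{R})$ is \emph{not} $L^2(\mathbb{R}^2)$, and this is exactly the point the corollary turns on. The projective tensor product of two $L^2$ spaces is (isometrically) the trace class, whereas $L^2(\mathbb{R}^2)$ is the Hilbert--Schmidt class, i.e.\ the \emph{Hilbert} tensor product $L^2(\mathbb{R})\dot\otimes L^2(\mathbb{R})$. On the algebraic tensor product the projective norm strictly dominates the $L^2(\mathbb{R}^2)$ norm: for orthonormal $e_1,\dots,e_n$ in $L^2(\mathbb{R})$ and $f_n=\sum_{i=1}^n e_i\otimes e_i$ one has $\|f_n\|_{L^2(\mathbb{R}^2)}=\sqrt{n}$ while the projective norm of $f_n$ equals $n$. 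Consequently, specializing Theorem~\ref{Lp}(i) to $p=q=2$, as you do, yields only $\|{\bf K}f\|_E\le\|v_2\|_E\,\|f\|_{L^2\hat\otimes_\pi L^2}$, a bound in the \emph{larger} norm on the \emph{smaller} space $L^2(\mathbb{R})\hat\otimes_\pi L^2(\mathbb{R})\subsetneq L^2(\mathbb{R}^2)$. Since $\|f\|_{L^2(\mathbb{R}^2)}\le\|f\|_{L^2\hat\otimes_\pi L^2}$, the inequality goes the wrong way: neither the stated domain nor the stated constant follows from the theorem, and no density argument repairs this, because continuity with respect to the $L^2(\mathbb{R}^2)$ norm on the dense subspace is precisely what is missing.

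The paper's own proof is accordingly \emph{not} a specialization of Theorem~\ref{Lp}: it cites the estimate \eqref{est2} with $p=2$ (where indeed $C_2=\tan\frac{\pi}{4}=1$, as you correctly computed, and $v_p=v_2$) together with the identity $L^2(\mathbb{R}^2)=L^2(\mathbb{R})\dot\otimes L^2(\mathbb{R})$ for the Hilbert tensor product, so the cross-norm property $\|f_1\otimes f_2\|_{L^2(\mathbb{R}^2)}=\|f_1\|_2\|f_2\|_2$ is read against the Hilbert, not the projective, norm. Your preliminary observations are all fine; the defect is solely the conflation of $\hat\otimes_\pi$ with $\dot\otimes$. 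Be aware, moreover, that even granting the correct identity, passing from \eqref{est2} --- which on a finite sum $\sum_i f_i\otimes g_i$ controls ${\bf K}f$ pointwise by the projective-type quantity $\sum_i\|f_i\|_2\|g_i\|_2$ --- to a bound by the Hilbert--Schmidt norm $\|f\|_{L^2(\mathbb{R}^2)}$ requires an argument exploiting orthogonality that is not supplied by the projective-norm machinery you (and, tersely, the paper) invoke; so the corollary genuinely strengthens, rather than instantiates, Theorem~\ref{Lp}(i), and your proposal does not reach it.
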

\begin{proof} This follows from the inequality \eqref{est2} with $p=2$ and the fact that $L^2(\mathbb{R}^2)=L^2(\mathbb{R})\dot\otimes L^2(\mathbb{R})$  (the Hilbert tensor product) see, e.g., \cite[Chapter 2, \S 8]{Helem}.
\end{proof}

\begin{theorem}\label{L_H_1} Let $E$ be a Banach lattice of measurable functions on $\mathbb{R}^2$. If the function $w(x_1,x_2)=|x_2|^{-1}$ belongs to $E$ then ${\bf K}$ is bounded as an operator between $L^\infty(\mathbb{R})\hat\otimes_\pi H^1(\mathbb{R})$ and $E$ and
$$
\|{\bf K}\|_{L^\infty(\mathbb{R})\hat\otimes_\pi H^1(\mathbb{R})\to E}\le  \|w\|_E.
$$
\end{theorem}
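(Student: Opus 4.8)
The plan is to mirror the proof of Theorem~\ref{Lp}(i), recognizing the present statement as the endpoint case $p=1$, $q=\infty$ of that result, in which the space $L^1(\mathbb{R})$ (on which the Hilbert transform fails to be bounded) is replaced by the real Hardy space $H^1(\mathbb{R})$. First I would reduce to simple tensors: let $f=f_1\otimes f_2$ with $f_1\in L^\infty(\mathbb{R})$ and $f_2\in H^1(\mathbb{R})$, and use the same factorization as in \eqref{est1},
$$
({\bf K}f)(x_1,x_2)=-\frac{1}{x_1}\int_{\mathbb{R}}f_1(y_1)(Hf_2)\left(\frac{x_2}{x_1}y_1\right)dy_1,
$$
which is available here because $f_2\in H^1\subset L^1$ guarantees that $Hf_2$ exists a.e. and lies in $L^1$, so that the inner Hilbert transform is well defined and the outer integral converges absolutely.

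The key pointwise estimate replaces the H\"older--Riesz pairing of Theorem~\ref{Lp} by an $L^\infty$--$L^1$ pairing. Bounding $|f_1(y_1)|\le\|f_1\|_\infty$ and performing the change of variables $u=\tfrac{x_2}{x_1}y_1$ (for $x_2\ne0$), which contributes a Jacobian factor $|x_1|/|x_2|$, gives
$$
|({\bf K}f)(x_1,x_2)|\le\frac{\|f_1\|_\infty}{|x_1|}\int_{\mathbb{R}}\left|(Hf_2)\left(\tfrac{x_2}{x_1}y_1\right)\right|dy_1=\frac{\|f_1\|_\infty}{|x_2|}\,\|Hf_2\|_{L^1}.
$$
Using the bound $\|Hf_2\|_{L^1}\le\|f_2\|_{H^1}$ intrinsic to the Hardy space yields the clean estimate $|({\bf K}f)(x_1,x_2)|\le w(x_1,x_2)\|f_1\|_\infty\|f_2\|_{H^1}$, with \emph{no} Riesz constant; this is precisely why the prefactor in the statement is $1$.

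From here the extension is routine and parallels Theorem~\ref{Lp}. For $f=\sum_{i=1}^n f_i\otimes g_i$ in the algebraic tensor product, subadditivity gives $|({\bf K}f)(x_1,x_2)|\le w(x_1,x_2)\sum_{i=1}^n\|f_i\|_\infty\|g_i\|_{H^1}$; taking the infimum over all such representations produces the projective norm, so that $|({\bf K}f)(x_1,x_2)|\le w(x_1,x_2)\|f\|_{L^\infty(\mathbb{R})\hat\otimes_\pi H^1(\mathbb{R})}$. Since $E$ is a Banach lattice and $w\in E$, this pointwise domination yields $\|{\bf K}f\|_E\le\|w\|_E\|f\|_{L^\infty(\mathbb{R})\hat\otimes_\pi H^1(\mathbb{R})}$, and a density argument (the algebraic tensor product is dense in the projective tensor product) completes the proof. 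I expect the only genuinely delicate point to be the first step: justifying the factorization \eqref{est1} for inputs $f_1\in L^\infty$, $f_2\in H^1$ (a Fubini argument together with the mapping property $H\colon H^1\to L^1$), since this is exactly the place where the Hardy-space hypothesis, rather than mere integrability, is indispensable.
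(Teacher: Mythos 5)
Your proposal is correct and follows essentially the same route as the paper's proof: the same factorization \eqref{est1}, the same $L^\infty$--$L^1$ pairing with the change of variables contributing the factor $|x_1|/|x_2|$, the same use of $\|Hf_2\|_1\le\|f_2\|_{H^1}$, and the same passage to the projective norm via the algebraic tensor product and the lattice property of $E$. The additional care you flag about justifying \eqref{est1} for $f_2\in H^1$ is a reasonable elaboration of a step the paper takes for granted, not a different argument.
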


\begin{proof}  Let $f\in L^\infty(\mathbb{R})\hat\otimes_\pi H^1(\mathbb{R})$ be of the form $f=f_1\otimes f_2$ where $f_1\in  L^\infty(\mathbb{R})$,  $f_2\in  H^1(\mathbb{R})$.
Then the equality \eqref{est1} implies for $x_1,x_2\ne 0$
\begin{eqnarray*}
|({\bf K}f)(x_1,x_2)|&\le& \frac{1}{|x_1|}\|f_1|_\infty\|Hf_2\left(\frac{x_2}{x_1}\cdot\right)\|_1\\ \nonumber
&=&\frac{1}{|x_1|}\frac{1}{\left|\frac{x_2}{x_1}\right|}\|f_1|_\infty\|Hf_2\|_1\\
&\le& \frac{1}{|x_2|}\|f_1|_\infty\|f_2\|_{H^1},
\end{eqnarray*}
since $\|Hg\|_1\le \|g\|_{H^1}$ for $g\in H^1(\mathbb{R})$ (see, e.g., \cite[p. 222]{Krantz}). Now as in the proof of Theorem \ref{Lp} one get that for $f=\sum_{i=1}^n f_i\otimes g_i$ where $f_i\in  L^\infty(\mathbb{R})$,  $g_i\in  H^1(\mathbb{R})$ the next inequality holds true
$$
|({\bf K}f)(x_1,x_2)|\le  \frac{1}{|x_2|}\sum_{i=1}^n\|f_i|_\infty\|g_i\|_{H^1}
$$
and the result follows.
\end{proof}

\begin{corollary}\label{H_1_L} Let $F$ be a Banach lattice of measurable functions on $\mathbb{R}^2$.
If the function $w_1(x_1,x_2)=|x_1|^{-1}$ belongs to $F$ then ${\bf K}$ is bounded as an operator between $H^1(\mathbb{R})\hat\otimes_\pi L^\infty(\mathbb{R})$ and $F$ and
$$
\|{\bf K}\|_{H^1(\mathbb{R})\hat\otimes_\pi L^\infty(\mathbb{R})\to F}\le  \|w_1\|_F.
$$
\end{corollary}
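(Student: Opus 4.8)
The plan is to deduce this from Theorem \ref{L_H_1} by exploiting the antisymmetry of the kernel under the interchange of the two coordinates, which transfers the singular weight from $|x_2|^{-1}$ to $|x_1|^{-1}$. Introduce the coordinate swap $\tau(x_1,x_2)=(x_2,x_1)$ and write $f^\tau=f\circ\tau$. The first step is to record the sign-reversing intertwining relation
$$
{\bf K}(f^\tau)=-({\bf K}f)\circ\tau .
$$
This follows by the substitution $(y_1,y_2)\mapsto(y_2,y_1)$ in the double integral \eqref{eq:UniqueIntOpn=2}: the Jacobian is $1$, so ${\bf K}(f^\tau)(x_1,x_2)$ becomes $\frac1\pi\int_{\mathbb{R}}\int_{\mathbb{R}}\frac{f(y_1,y_2)}{x_1y_1-x_2y_2}\,dy_1\,dy_2$, whereas $({\bf K}f)(x_2,x_1)$ carries the denominator $x_2y_2-x_1y_1=-(x_1y_1-x_2y_2)$, which produces the claimed sign. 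As in \eqref{est1}, one should note that for the tensor factors under consideration the inner principal-value integral is a genuine Hilbert transform and the outer integral converges absolutely, so the manipulation is legitimate.

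The second step is the reduction itself. Take a rank-one $f=f_1\otimes f_2$ with $f_1\in H^1(\mathbb{R})$ and $f_2\in L^\infty(\mathbb{R})$. Then $f^\tau=f_2\otimes f_1$ lies in $L^\infty(\mathbb{R})\hat\otimes_\pi H^1(\mathbb{R})$, the flip of tensor factors being an isometric isomorphism of the two projective tensor products. The pointwise estimate established inside the proof of Theorem \ref{L_H_1} gives
$$
|({\bf K}f^\tau)(x_1,x_2)|\le \frac{1}{|x_2|}\|f_2\|_\infty\|f_1\|_{H^1},
$$
and feeding this through the intertwining relation, together with the replacement $(x_1,x_2)\mapsto(x_2,x_1)$, yields
$$
|({\bf K}f)(x_1,x_2)|\le \frac{1}{|x_1|}\|f_1\|_{H^1}\|f_2\|_\infty=w_1(x_1,x_2)\,\|f_1\|_{H^1}\|f_2\|_\infty .
$$

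From here the argument is the now-standard one used in Theorems \ref{Lp} and \ref{L_H_1}: extend the pointwise bound to an arbitrary finite sum $f=\sum_{i=1}^n f_i\otimes g_i$, pass to the infimum defining the projective norm to obtain $|({\bf K}f)(x_1,x_2)|\le w_1(x_1,x_2)\,\|f\|_{H^1(\mathbb{R})\hat\otimes_\pi L^\infty(\mathbb{R})}$ for $f$ in the algebraic tensor product, and then invoke the Banach lattice property of $F$ to conclude $\|{\bf K}f\|_F\le\|w_1\|_F\,\|f\|_{H^1(\mathbb{R})\hat\otimes_\pi L^\infty(\mathbb{R})}$, the estimate extending to the full projective tensor product by density.

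I expect the only point requiring real care to be the first step: justifying the change of variables inside the conditionally convergent double integral, and hence the intertwining relation. Should one prefer to sidestep this, the same pointwise estimate can be obtained directly, in complete parallel with Theorem \ref{L_H_1}, by integrating \eqref{eq:UniqueIntOpn=2} in $y_1$ first --- recognizing $\frac1\pi\int_{\mathbb{R}}\frac{f_1(y_1)}{x_1y_2-x_2y_1}\,dy_1=\frac{1}{x_2}(Hf_1)\!\left(\frac{x_1y_2}{x_2}\right)$ --- and then substituting $u=\frac{x_1y_2}{x_2}$ in the remaining $y_2$-integral, which converts the factor $|x_2|^{-1}$ into $|x_1|^{-1}$ while $\|Hf_1\|_1\le\|f_1\|_{H^1}$ controls the result.
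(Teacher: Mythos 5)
Your proposal is correct and follows essentially the same route as the paper: reduce to Theorem \ref{L_H_1} via the coordinate swap $(x_1,x_2)\mapsto(x_2,x_1)$, using that the flip of tensor factors is an isometric isomorphism of the projective tensor products and that a Banach lattice of functions of $|x_1|^{-1}$-type is carried to one of $|x_2|^{-1}$-type. The only difference is one of explicitness: you state and verify the intertwining relation ${\bf K}(f^\tau)=-({\bf K}f)\circ\tau$ (and offer a direct pointwise estimate as a fallback), whereas the paper leaves this symmetry of the antisymmetric kernel implicit.
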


\begin{proof} The map $g(x_1,x_2)\mapsto g(x_2,x_1)$ is an isometric isomorphism of the space $F$ on some  Banach lattice $E$ with $w_2\in E$. It remains to note that the  map $$f_1(y_1)f_2(y_2)\mapsto f_2(y_1)f_1(y_2)$$ extends to an isometric isomorphism of the space $L^\infty(\mathbb{R})\hat\otimes_\pi H^1(\mathbb{R})$ onto the space $H^1(\mathbb{R})\hat\otimes_\pi L^\infty(\mathbb{R})$.
\end{proof}

\begin{remark} As was noticed in \cite{AK},  image of the operator ${\bf K}$ consists of homogeneous functions of
order $-1$, so ${\bf K}$ cannot act, say, into Lebesgue  spaces   $L^p(\mathbb{R}^2)$ (e.g., into the space $L^1(\mathbb{R}^2) = L^1(\mathbb{R})\hat\otimes_\pi L^1(\mathbb{R})$).
\end{remark}

\vspace{.5cm}

\noindent {\bf Funding information.}
Alexey Karapetyants and Adolf Mirotin were supported by the Regional Mathematical Center of Southern Federal University under the program of the Ministry of Education and Science of the Russian Federation, agreement No. 075-02-2025-1720. Zhirayr Avetisyan was supported by the FWO Senior Research Grant G022821N and the Methusalem programme of the Ghent University Special Research Fund (BOF) (Grant number 01M01021).

\noindent
{\bf{Data availability and conflict of interest.}}
The authors confirm that all data generated or analyzed during this study are included in this article.
This work does not have any conflicts of interest.


\begin{thebibliography}{99}


\bibitem{AK}
Zhirayr Avetisyan, Alexey Karapetyants, Homogeneous operators and homogeneous integral operators, Math Meth Appl Sci, Vol. 46, no 1 (2023), pp. 811--829,  https://doi.org/10.1002/mma.8549


\bibitem{BE}
H. Bateman, A. Erdelyi, Tables of integral transforms. Vol. II, N. Y. -Toronto-London, McGraw-Hill, 1954.

\bibitem{King}
F. W. King, Hilbert transforms. Vol.1, Cambridge University Press, Cambridge (2009).

\bibitem{KarapetiantsSamko-book-rus} N.K.Karapetiants and S. G. Samko, {\sl Equations with an Involutive Operators and Their Applications}, Rostov University Publishing House, Rostov-on-Don, 1988 (in Russian).
		
\bibitem{KarapetiantsSamko-book-eng} N.K.Karapetiants and S. G. Samko, {\sl Equations with an Involutive Operators and Their Applications}, Boston, Birkhauser,  2001.
		
\bibitem{KarapetiantsSamko} N.K. Karapetyants and S.G. Samko, “Multi-dimensional integral operators with homogeneous kernels,” Frac. Calc. Appl. Anal. 2, 67–96 (1999).

\bibitem{DF}
A. Defant, K. Floret, Tensor Norms and Operator Ideals, North-Holland - Amsterdam etc., 1993.

\bibitem{Helem}
A. Ya. Helemskii, Lectures and Exercises on Functional Analysis, American Mathematical Society, Providence, Rhode Island, 2006.

\bibitem{Krantz}
S. G. Krantz, A Panorama of Harmonic Analysis, Carus Mathematical Monographs,  The Mathematical Association of America, Washington, 1999.

\bibitem{Ryan}
Raymond A. Ryan, Introduction to Tensor Products of Banach Spaces,  Springer-Verlag, London, 2002.

\bibitem{G-1981} J.B. Garnett. Bounded Analytic Functions,
Academic Press, (1981) 466 pages.

\end{thebibliography}
\end{document}